\documentclass[a4paper,reqno]{amsart}

\language 2 \righthyphenmin 3
\usepackage[pagebackref]{hyperref}
\usepackage{amsthm}
\usepackage{amsmath}
\usepackage{amsfonts}
\usepackage{amssymb}
\usepackage[T1]{fontenc}
\usepackage[arrow, matrix, curve]{xy}
\usepackage{picinpar, graphicx }
\usepackage{epic}

\sloppy \frenchspacing
\newtheorem{Thm}{Theorem}
\newtheorem{Def}{Definition}
\newtheorem{Rem}{Remark}
\newtheorem{Lem}{Lemma}
\newtheorem{Prop}{Proposition}
\newtheorem{Cor}{Corollary}
\newtheorem{Q}{Question}
\newtheorem{Ex}{Example}

\newcommand{\Ww}{\mathcal{W}}
\newcommand{\Hh}{\mathcal{H}}
\newcommand{\Rr}{\mathcal{R}}
\newcommand{\Ee}{\mathcal{E}}
\newcommand{\Kk}{\mathcal{K}}
\newcommand{\RR}{\mathbb{R}}
\newcommand{\Gg}{\mathcal{G}}
\newcommand{\VV}{\mathbb{V}}
\newcommand{\Mm}{\mathcal{M}}

\begin{document}

\title[Local conservation laws by generalized isometric embeddings]{Construction of Local Conservation Laws by Generalized Isometric Embeddings of Vector Bundles}
\author{Nabil Kahouadji}

\date{April 10, 2008.}

\address{Institut de Math\'ematiques de Jussieu, Universit\'e Paris Diderot-Paris 7\\
UFR de Math\'ematiques\\
\'Equipe: G\'eom\'etrie et Dynamique\\
case 7012\\
2, place jussieu\\
75251-Paris Cedex 05, France.}
\email{\href{mailto:kahouadji@math.jussieu.fr}{kahouadji@math.jussieu.fr}}

%\urladdr{\href{http://www.math.jussieu.fr/~kahouadji}%
  %       {http://www.math.jussieu.fr/\lower3pt\hbox{\symbol{'176}}kahouadji}}

\subjclass{
 58A15, %   Exterior differential systems (Cartan theory)
 37K05, %   Conservation laws
 32C22 % Embedding of analytic spaces
 }

\keywords{Conservation laws, Generalized isometric embeddings of vector bundles,  Exterior differential systems, Cartan--K\"{a}hler theory, Conservation laws for energy-momentum tensors.}

\maketitle

\begin{abstract}
This article uses Cartan--K\"{a}hler theory to construct local conservation laws from covariantly closed vector valued differential forms, objects that can be given, for example, by harmonic maps between two Riemannian manifolds.  We apply the article's main result to construct conservation laws for covariant divergence free energy-momentum tensors. We also generalize the  local isometric embedding of surfaces in the analytic case by applying the main result to vector bundles of rank two over any surface.
\end{abstract}

\section{Introduction}

A conservation law can be seen as a  map defined on a space $\mathcal{F}$ (which can be for instance, a  function space, a fiber bundle section space, etc.) that associates  each  element $f$ of $\mathcal{F}$ with  a vector field $X$ on an $m$-dimensional Riemannian manifold $\mathcal{M}$, such that if $f$ is a solution to  a given PDE on $\mathcal{F}$, the vector field $X$ has a vanishing divergence. If we denote  by $g$ the Riemannian metric on the manifold  $\mathcal{M}$, we can canonically associate each vector field $X\in \Gamma(\mathrm{T}\mathcal{M})$ with  a differential 1-form $\alpha_{X}: = g(X,\cdot)$. Since $\mathrm{div}(X)= \ast \mathrm{d} \ast \alpha_{X}$  (or $\mathrm{div}(X)\mathrm{vol}_{\mathcal{M}}= \mathrm{d}(X \lrcorner \, \mathrm{vol}_{\mathcal{M}}))$, 
 where $\ast$ is the Hodge operator, $\mathrm{vol}_{\mathcal{M}}$ is the volume form on $\mathcal{M}$, and $X\lrcorner \, \mathrm{vol}_{\mathcal{M}}$ is the interior product of $\mathrm{vol}_{\mathcal{M}}$ by the vector field $X$, the requirement $\mathrm{div}(X)=0$ may be replaced by the requirement  $\mathrm{d}(X \lrcorner \, \mathrm{vol}_{\mathcal{M}}) = 0$, and hence, conservation laws may also be seen   as  maps on $\mathcal{F}$ with values on  differential $(m-1)$-forms such that solutions to PDEs are mapped to closed differential $(m-1)$-forms on $\mathcal{M}$. More generally, we could extend the notion of conservation laws as mapping to differential $p$-forms (for instance, Maxwell equations in vacuum can be expressed, as it is well-known, by requiring a system of differential 2-forms to be closed). In this paper, we address the question of finding conservation laws for a class of PDE described as follows: 
 
\begin{Q} 
Let   $\mathbb{V}$  be  an $n$-dimensional vector bundle over $\mathcal{M}$. Let $g$ be a metric bundle  and $\nabla$ a connection that is compatible with that metric. We then have a covariant derivative $\mathrm{d}_{\nabla}$ acting on vector valued differential forms. Assume that $\phi$ is a given covariantly closed  $\mathbb{V}$-valued differential $p$-form on $\mathcal{M}$, i.e.,
\begin{equation}\label{dnablaphi=0}
\mathrm{d}_{\nabla}\phi = 0.
\end{equation}
Does there exist $N\in \mathbb{N}$ and an embedding $\Psi$ of $\mathbb{V}$ into $\mathcal{M}\times \mathbb{R}^N$ given by $\Psi(x,X)=(x,\Psi_{x}X)$, where $\Psi_{x}$ is a linear map from $\mathbb{V}_{x}$ to $\mathbb{R}^N$ such that:
\begin{itemize}
\item $\Psi$ is isometric, i.e, for every $x\in \mathcal{M}$, the map $\Psi_{x}$ is an isometry,
\item if $\Psi(\phi)$ is the image of $\phi$ by $\Psi$, i.e.,  $\Psi(\phi)_{x} = \Psi_{x}\circ \phi_{x}$ for all $x\in \mathcal{M}$, then
\begin{equation}\label{dpsiphi=0}
\mathrm{d}\Psi(\phi)=0.
\end{equation}
\end{itemize}
\end{Q}

In this problem, the equation (\ref{dnablaphi=0}) represents  the given PDE (or a system of PDEs) and the map $\Psi$ plays the role of a conservation law. Note that the problem is trivial when the vector bundle is a line bundle. Indeed, the only connection on a real line bundle which is compatible with the metric is the flat one.\\

%since the connection is compatible with the metric, the 1-form of the connection, which is an $\mathfrak{o}(1)$-valued differential 1-form, is  skew-symmetric and hence all covariant closed $\mathbb{V}^1-$valued differential $p$-forms are actually closed in the standard sense. 

A fundamental example is the isometric embedding of Riemannian manifolds in Euclidean spaces and is related to the above problem as follows: $\mathcal{M}$ is an $m$-dimensional  Riemannian manifold, $\mathbb{V}$ is the tangent bundle $\mathrm{T}\mathcal{M}$, the connection $\nabla $ is the Levi-Civita connection, $p=1$  and the $\mathrm{T}\mathcal{M}$-valued  differential 1-form $\phi$ is the identity map on $\mathrm{T}\mathcal{M}$. Then  (\ref{dnablaphi=0}) expresses the torsion-free condition for the connection $\nabla$  and any solution $\Psi$ to (\ref{dpsiphi=0}) provides an isometric embedding $u$ of the Riemannian manifold $\mathcal{M}$ into a Euclidean space $\mathbb{R}^N$ through $\mathrm{d}u=\Psi(\phi)$, and conversely. An answer to the local analytic isometric embeddings of Riemannian manifolds is given by the Cartan--Janet theorem,  \cite{Cartan-Art,Janet-Art}. Despite the fact that  the Cartan--Janet result is local and the analicity hypothesis on the  data  may seem to be too restrictive, the Cartan--Janet theorem is important because it  actualizes the embedding in an optimal dimension  unlike the Nash--Moser isometric embedding which is a smooth and  global result. Consequently, if the above problem has a positive answer for $p=1$, the notion of isometric embeddings of Riemannian manifolds is extended to a notion  of \textit{generalized isometric embeddings} of vector bundles. The general problem, when $p$ is arbitrary, can also be viewed as an \textit{embedding of covariantly closed vector valued differential $p$-forms}.

Another  example expounded in \cite{Helein-Book} of such covariantly closed   vector valued differential forms is given by harmonic maps between two Riemannian manifolds. Indeed, let us consider a map $u$ defined on an $m$-dimensional Riemannian manifold $\mathcal{M}$ with values in an $n$-dimensional Riemannian manifold $\mathcal{N}$.  On the induced bundle\footnote{ or the pull back bundle.} by $u$ over $\mathcal{M}$, the $u^{\ast}\mathrm{T}\mathcal{N}$-valued differential  $(m-1)$-form $\ast \mathrm{d}u$ is covariantly closed  if and only if the map $u$ is harmonic, where the connection on the induced bundle is the pull back by $u$ of the Levi-Civita connection on $\mathcal{N}$.  A positive answer to the above problem in this case would make it possible to construct conservation laws on $\mathcal{M}$ from covariantly closed  vector valued differential $(m-1)$-forms, provided, for example, by harmonic maps. In his book \cite{Helein-Book}, motivated by the question of the compactness of weakly harmonic maps in Sobolev spaces in the weak topology (which is still an open question), H\'elein considers harmonic maps between Riemannian manifolds and explains how conservation laws may be obtained explicitly by the  Noether's theorem if the target manifold is symmetric and formulates the problem for non symmetric target manifolds.  
\\

In this article, our main result is a positive answer  when $p=m-1$ in the analytic case. We also find, as in the Cartan--Janet theorem,  the minimal required dimension that ensures the \textit{generalized isometric embedding} of an arbitrary vector bundle relative to a covariantly closed vector valued differential $(m-1)$-form.

\begin{Thm}\label{CLGIEVBThm}
Let $\mathbb{V}$ be a real analytic  $n$-dimensional vector bundle  over a real analytic $m$-dimensional manifold $\mathcal{M}$ endowed with a metric $g$ and a connection $\nabla$ compatible with $g$. Given a non-vanishing covariantly closed $\mathbb{V}$-valued differential $(m-1)$-form $\phi$, there exists a local isometric embedding of $\mathbb{V}$ in  $\displaystyle{\mathcal{M}\times \mathbb{R}^{n+\kappa^n_{m,m-1}}}$ over $\mathcal{M}$ where $\displaystyle{\kappa^n_{m,m-1} \geqslant (m-1)(n-1)}$ such that the image of $\phi$ is a conservation law.
\end{Thm}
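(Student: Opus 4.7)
The strategy is to recast the theorem as an existence statement for an integral manifold of a suitable exterior differential system (EDS) and then invoke the Cartan--K\"ahler theorem, mirroring Cartan's original treatment of isometric embeddings.

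First I would fix a local $g$-orthonormal frame $(e_i)_{i=1,\ldots,n}$ of $\mathbb{V}$ and decompose $\phi=e_i\otimes\phi^i$, where the $\phi^i$ are scalar $(m-1)$-forms on $\mathcal{M}$; the hypothesis $\mathrm{d}_\nabla\phi=0$ becomes $\mathrm{d}\phi^i+\omega^i_j\wedge\phi^j=0$ with connection 1-forms $\omega^i_j$ skew in $i,j$ by metric compatibility. Consider the fibre bundle $\mathcal{Z}\to\mathcal{M}$ whose fibre over $x$ is the Stiefel manifold of orthonormal $n$-frames $(\Psi_1,\ldots,\Psi_n)$ in $\mathbb{R}^N$, with $N=n+\kappa$. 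On $\mathcal{Z}$ the tautological $\mathbb{R}^N$-valued functions $\Psi_i$ satisfy $\langle\Psi_i,\Psi_j\rangle=\delta_{ij}$, and differentiating produces the decomposition
$$\mathrm{d}\Psi_i=\pi^j_i\,\Psi_j+\xi_i,\qquad \pi^j_i+\pi^i_j=0,\qquad \langle\xi_i,\Psi_j\rangle=0,$$
where the $\mathbb{R}^N$-valued 1-forms $\xi_i$ are normal to the frame.

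Next I would identify the two conditions to impose on $\mathcal{Z}$: (a) $\Psi$ intertwines $\nabla$ with the flat connection on the trivial bundle, equivalently $\pi^i_j=\omega^i_j$; and (b) the conservation law $\mathrm{d}\Psi(\phi)=0$. A direct calculation using $\mathrm{d}_\nabla\phi=0$ reduces (a) and (b) to the exterior equations
$$\vartheta^i_j:=\pi^i_j-\omega^i_j=0,\qquad \Xi:=\xi_i\wedge\phi^i=0,$$
with $\Xi$ an $\mathbb{R}^N$-valued $m$-form. Let $\mathcal{I}$ be the differential ideal on $\mathcal{Z}$ generated by the 1-forms $\vartheta^i_j$ and the components of $\Xi$, equipped with the independence condition given by a pulled-back coframe on $\mathcal{M}$. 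Its $m$-dimensional integral manifolds transverse to the fibres are precisely the graphs of local solutions $\Psi$ to the problem.

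The core of the proof, and the main obstacle, is to verify via Cartan's test that $\mathcal{I}$ is involutive whenever $N=n+\kappa$ with $\kappa\geqslant(m-1)(n-1)$. The non-vanishing of $\phi$ lets one reduce the $\phi^i$ to a convenient normal form in adapted coframes, making the polar equations tractable. After absorbing torsion in the system $\vartheta^i_j=0$ and prolonging to second-order unknowns parametrising the $\xi_i$, one computes the reduced Cartan characters $s'_1,\ldots,s'_m$ at a generic integral flag. The coupling of the $n$ normal 1-forms $\xi_i$ through the $(m-1)$-forms $\phi^i$ in the single equation $\xi_i\wedge\phi^i=0$ is what forces the transverse rank $N-n$ to be at least $(m-1)(n-1)$ for the dimension of the space of generic integral $m$-planes to equal $\sum_k k\,s'_k$: this is a combinatorial bookkeeping analogous to the Janet count, and any extra normal directions only enlarge the space of integral planes without destroying involutivity. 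Once involutivity is established, Cartan--K\"ahler in the real-analytic category yields a local integral manifold, whose projection to $\mathcal{Z}$ is the desired isometric embedding $\Psi$ with $\mathrm{d}\Psi(\phi)=0$.
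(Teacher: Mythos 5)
Your overall framework is the same as the paper's: recast the problem as an EDS on $\mathcal{M}\times SO(n+\kappa)/SO(\kappa)$ generated by $\omega^i_j-\eta^i_j$ and $\omega^a_i\wedge\phi^i$ (your $\vartheta^i_j$ and $\Xi$), close it, check Cartan's test, and invoke Cartan--K\"ahler in the analytic category. The gap is in the one step you dismiss as ``combinatorial bookkeeping analogous to the Janet count.'' Closing the ideal forces the 2-forms $\sum_a\omega^a_i\wedge\omega^a_j-\Omega^i_j$ into $\mathcal{I}$, so an $m$-dimensional integral element parametrised by coefficients $H^a_{i\lambda}$ (writing $\omega^a_i=H^a_{i\lambda}\eta^\lambda$ on the plane) must satisfy the \emph{quadratic} generalized Gauss equation $\sum_a(H^a_{i\lambda}H^a_{j\mu}-H^a_{i\mu}H^a_{j\lambda})=\mathcal{R}^i_{j;\lambda\mu}$ simultaneously with the linear generalized Cartan identities coming from $\Xi=0$. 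This torsion is quadratic in the fibre unknowns and cannot simply be ``absorbed'' as in a linear Pfaffian system; one must prove (i) that solutions exist for the given curvature, and (ii) that the solution variety is a smooth submanifold of the correct codimension, since Cartan's test requires comparing $C_0+\cdots+C_{m-1}$ with the actual codimension of $\mathcal{V}_m(\mathcal{I},\eta^\Lambda)$ in the Grassmannian. This is exactly the content of the paper's Lemma~\ref{LemFond}: the generalized Gauss map, restricted to the set of $H$ satisfying the generalized Cartan identities and with $\{H_{i\lambda}\}_{i\leqslant n-1,\,\lambda\leqslant m-1}$ linearly independent, is a surjective submersion onto the generalized curvature space $\mathcal{K}^n_{m,m-1}$. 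Its proof is not a Janet-type count: it requires an explicit block-triangular rank computation of $\mathrm{d}\mathcal{G}^n_{m,m-1}$ organised by a flag of subspaces of $\mathcal{K}^n_{m,m-1}$, plus an explicit construction of a preimage of $0$ (an orthonormal choice of the $H_{i\lambda}$ together with prescribed $H_{jm}$) and a homogeneity argument to get surjectivity. The bound $\kappa\geqslant(m-1)(n-1)$ is produced by this lemma, not by the character count.

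A second, related omission: you implicitly assume the analogue of Cartan's lemma, i.e.\ that the normal derivatives entering through $\Xi=0$ behave as symmetrically as in the classical isometric embedding problem. For an arbitrary vector bundle with an $(m-1)$-form $\phi$ Cartan's lemma fails; the symmetry of the second fundamental form must be replaced by the generalized Cartan identities $\sum_\lambda(-1)^{\lambda+1}H^a_{i\lambda}\psi^i_{\Lambda\smallsetminus\lambda}=0$, and the compatibility of these identities with the Gauss equation is precisely what the hypotheses of Lemma~\ref{LemFond} encode. Without making this interaction explicit, the computation of the reduced characters via Proposition~\ref{PropCp} and the verification that the constructed flag is ordinary cannot be carried out. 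So the architecture of your argument is right, but the theorem's substance lives in the step you left unproved.
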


%On the other hand, obstructions to the existence of a general solution to our question may exist. Indeed, as proved in section 3, 

%\begin{Prop}\label{ObstructionProp}
%Let $\mathbb{V}$ be an $n$-dimensional vector bundle over an $m$-dimensional manifold $\mathcal{M}$. Let $g$ be a metric bundle and $\nabla$ a connection that is compatible with that metric. Let $\phi$ be a covariantly closed  $\mathbb{V}$-valued differential $p$-form on $\mathcal{M}$, i.e., $\mathrm{d}_{\nabla}\phi = 0$, where $p\leqslant m-2$. Then a necessary condition for the existence of an embedding $\Psi$ of $\mathbb{V}$ into $\mathcal{M}\times\mathbb{R}^N$ in general,  such that $\Psi$ is isometric and $\mathrm{d}(\Psi (\phi))=0$ is  
%\begin{equation}
%\frac{(n-1)}{4} > \frac{(m-2)!}{(p+2)!(m-p-2)!}\cdot
%\end{equation}
%\end{Prop}

The existence result Theorem \ref{CLGIEVBThm} can be applied to harmonic maps. We show in the last section of this paper a further application related to energy-momentum tensors which occur e.g. in general relativity. 
\\

The strategy for proving Theorem \ref{CLGIEVBThm} is the following: we reformulate the problem by means of an  exterior differential system on a manifold that must  be defined, and since all the data involved are real analytic, we use the Cartan--K\"{a}hler theory to prove the existence of integral manifolds. The problem can be represented by the following diagram that  summarizes the notations %\begin{equation}\nonumber
%\begin{xy}
%\xymatrix{ \mathbb{V}^{n}\ar[d]_{d_{\nabla}\phi =0} \ar@^{(->}[r]^{\Psi\ \ \ \ }&  \mathcal{M}^{m}\times\mathbb{R}^{N}\ar[d]^{d\Psi(\phi)=0}\\
 %\mathcal{M}^{m} &\mathcal{M}^{m}}
%\end{xy}
%\end{equation}

\begin{figure}[htbp]
\begin{center}
\begin{picture}(0,70)(0,-35)

%\drawline(-110,50)(110,50)(110,-50)(110,-50)(-110,-50)(-110,50)

\put(-45,30){\makebox(0,0){$\VV^n$}}
\put(-48,24){\vector(0,-1){50}}
\put(-46,-30){\makebox(0,0){$\tiny{\Mm^m}$}}
\put(-78,0){\makebox(0,0){$\tiny{(\mathrm{d}_{\nabla}\phi})_{p}\hspace{-1mm}=\hspace{-0.5mm}0$}}

\put(-75,30){\makebox(0,0){\small{$g,\nabla , $}}}

\qbezier(-32, 29)(-36,29)(-36, 31)
\qbezier(-36, 31)(-36,33)(-32, 33)
\drawline(-32,29)(13,29)
\qbezier(13,29)(7,30)(6,32)
%\put(-32,29){\vector(1,0){45}}
\put(-13,35){\makebox(0,0){$\Psi$}}

\put(47,-30){\makebox(0,0){$\tiny{\Mm^m}$}}
\put(45,24){\vector(0,-1){50}}
\put(48,30){\makebox(0,0){$ \mathcal{M}^{m}\times\mathbb{R}^{N^n_{m,p}}$}}

\put(72,0){\makebox(0,0){$\tiny{\mathrm{d}\Psi(\phi)}\hspace{-1mm}=\hspace{-0.5mm}0$}}

\end{picture}
\caption{Generalized isometric embedding}
\label{default}
\end{center}
\end{figure}

\hspace{-4.5mm}where $N$ is an integer  that have to be defined in terms of the problem's  data: $n$, $m$ and $p$. Let us then set up a general strategy as an attempt to solve the general problem. We denote by $\kappa^n_{m,p}$ the  \textit{embedding codimension}, i.e., the dimension of the fiber extension in order to achieve the desired embedding. Since the Cartan--K\"{a}hler theory plays an important role in this paper and since the reader  may not be  familiar with exterior differential systems (EDS) and the Cartan--K\"{a}hler theorem, generalities are expounded  in section 2 concerning these notions and results. For details and proofs, the reader may consult \'Elie Cartan's book \cite{Cartan-Book} and the third chapter  of \cite{ExtDiffSys-Book}.

 Let us first recast our problem by using moving frames and coframes. For convenience, we adopt the following conventions for the indices: $i,j,k = 1 , \dots , n$ are the fiber indices , $\lambda , \mu , \nu = 1, \dots , m$ are the manifold indices and $a,b,c = n+1 , \dots , n+ \kappa^n_{m,p}$ are the extension indices.  We also adopt the Einstein summation convention, i.e., assume a summation when the same index is repeated in high and low positions. However, we will write the sign $\sum$ and make explicit the values of the summation indices when  necessary. Let $\eta=(\eta^1 , \dots , \eta^m)$ be a moving coframe on $\mathcal{M}$. Let $E=(E_{1}, \dots , E_{n})$ be an orthonormal  moving frame of $\mathbb{V}$. The covariantly closed  $\mathbb{V}$-valued differential $p$-form $\phi \in \Gamma(\wedge^p\mathcal{M}\otimes\mathbb{V}_{n})$ can be expressed as follows:
\begin{equation}
\phi=E_{i}\phi^{i}= E_{i}\psi^{i}_{\lambda_{1}, \dots , \lambda_{p}}\eta^{\lambda_{1}, \dots , \lambda_{p}}
\end{equation}
where $\psi^{i}_{\lambda_{1}, \dots , \lambda_{p}}$ are functions on $\mathcal{M}$. We assume that $1\leqslant \lambda_{1} < \dots < \lambda_{p} \leqslant m$ in the summation, and that  $\eta^{\lambda_{1}, \dots , \lambda_{p}}$ means $\eta^{\lambda_{1}}\wedge\dots\wedge\eta^{\lambda_{p}}$.

\begin{Def}
Let $\phi \in \Gamma(\wedge^p\mathcal{M}\otimes \mathbb{V})$  be a $\mathbb{V}$-valued differential $p$-form on $\mathcal{M}$. The generalized torsion of a connection relative to $\phi$ (or for short, a $\phi$-torsion) on a vector bundle over $\mathcal{M}$   is a $\mathbb{V}$-valued differential  $(p+1)$-form $\Theta=(\Theta^{i}):=\mathrm{d}_{\nabla}\phi$, i.e., in a local frame
\begin{equation}
\Theta = E_{i}\Theta^{i}:=E_{i}(\mathrm{d}\phi^{i}+ \eta^{i}_{j}\wedge\phi^{j}) \quad \text{ for all } \quad i \end{equation}
where $(\eta^{i}_{j})$ is the connection 1-form of $\nabla$ which is an $ \mathfrak{o}(n)$-valued differential 1-form (since  $\nabla$ is compatible with the metric bundle).
\end{Def}
Thus, the condition of being covariantly closed $\displaystyle{\mathrm{d}_{\nabla}\phi = 0}$ is equivalent to the fact that,  $\mathrm{d}\phi^{i}+\eta^{i}_{j}\wedge\phi^j = 0$ for all  $i = 1, \dots , n$. From the above definition, the connection $\nabla$ is $\phi$-torsion free. We also notice  that the generalized torsion defined above reduces to the standard torsion in the tangent bundle case when $\phi=E_{i}\psi^{i}_{\lambda}\eta^\lambda = E_{i}\eta^{i}$ (the functions  $\psi^{i}_{\lambda} = \delta^{i}_{\lambda}$ are the Kronecker tensors), and  the connection is Levi-Civita.
\begin{equation}
\mathrm{d}_{\nabla}\phi = 0 \Longleftrightarrow \mathrm{d}\phi^{i}+\eta^{i}_{j}\wedge\phi^j = 0 \quad \text{for all }  i = 1, \dots , n.
\end{equation}

Assume that the problem has a solution. We consider the flat connection 1-form $\omega$ on the Stiefel space $SO(n+\kappa^n_{m,p})/SO(\kappa^n_{m, p})$, the $n$-adapted frames of $\mathbb{R}^{(n+\kappa^n_{m,p})}$, i.e., the set of orthonormal families of $n$ vectors $\Upsilon = (e_{1}, \dots , e_{n})$ of $\mathbb{R}^{(n+\kappa^n_{m,p})}$ which can be completed by orthonormal $\kappa^n_{m,p}$ vectors $(e_{n+1}, \dots,e_{n+\kappa^n_{m,p}})$ to obtain an orthonormal set of $(n+\kappa^n_{m,p})$ vectors. Since we work locally, we will assume without loss of generality that we are given a cross-section $(e_{n+1},\dots ,e_{n+\kappa^n_{m,p}} )$ of the bundle fibration $SO(n+\kappa^n_{m,p})\longrightarrow SO(n+\kappa^n_{m,p})/SO(\kappa^n_{m,p})$. The flat standard 1-form of the connection $\omega$ is defined as follows: $\omega^{i}_{j} = \langle e_{i},\mathrm{d}e_{j}\rangle$ and $\omega^{a}_{i} = \langle e_{a}, \mathrm{d}e_{i}\rangle$, where $\langle,\rangle$ is the standard inner product on $\mathbb{R}^{n+\kappa^n_{m,p}}$. Notice that $\omega$ satisfies  Cartan's structure equations. Suppose now that such an isometric embedding exists, then,  if $e_{i}=\Psi(E_{i})$, the condition $\mathrm{d}\Psi(\phi)=0$ yields to
\begin{equation}
e_{i}(\mathrm{d}\phi^{i} + \omega^{i}_{j}\wedge\phi^j) + e_{a}(\omega^{a}_{i}\wedge\phi^{i})=0,
\end{equation}
a condition which is satisfied  if and only if
\begin{equation}
\eta^{i}_{j} = \Psi^{\ast}(\omega^{i}_{j}) \quad \text{ and }\quad \Psi^{\ast}(\omega^{a}_{i})\wedge\phi^{i} = 0.
\end{equation}

The problem then turns  to  finding moving frames $(e_{1}, \dots e_{n}, e_{n+1}, \dots , e_{n+\kappa^n_{m,p}})$ such that  there exist  $m$-dimensional integral manifolds of the exterior ideal generated by the naive exterior differential system $\{ \omega^{i}_{j} - \eta^{i}_{j} , \omega^{a}_{i}\wedge\phi^{i}\}$  on the product manifold
\begin{equation}
\mathbf{\Sigma}^n_{m,p} = \mathcal{M}\times \frac{SO(n+\kappa^n_{m,p})}{SO(\kappa^n_{m,p})}.
\end{equation}

Strictly speaking, the differential forms live in different spaces. Indeed, one should consider the projections $\pi_{\mathcal{M}}$ and $\pi_{St}$ of $\mathbf{\Sigma}^n_{m,p}$ on $\mathcal{M}$ and the Stiefel space and consider the ideal  on $\mathbf{\Sigma}^n_{m,p}$ generated by   $\pi^\ast_{\mathcal{M}}(\eta^{i}_{j}) - \pi^\ast_{St}(\omega^{i}_{j})$  and $\pi^\ast_{St}(\omega^{a}_{i})\wedge\pi^\ast_{\mathcal{M}}(\phi^{i})$. It seems reasonable however to simply write $\{\omega^{i}_{j}-\eta^{i}_{j} , \omega^{a}_{i}\wedge\phi^{i}\}$.\\

To find integral manifolds of the naive EDS, we  would need to check  that the exterior ideal is closed under the differentiation. However, this  turns out not to be the case. The idea is then to  add to the naive EDS the differential of the forms that generate it and therefore, we obtain a closed one. \\

The objects which we are dealing with in the following  have a geometric meaning in the tangent bundle case with a standard 1-form (the orthonormal moving coframe, as explained above) but not in the arbitrary vector bundle case as we noticed earlier with the notion of torsion of a connection. That leads us to define notions in a generalized sense in such a way that we recover the standard notions in the tangent bundle case. First of all, the Cartan lemma,  which in the isometric embedding problem implies the symmetry of the second fundamental form,  does not hold. Consequently, we can not assure nor assume that the coefficients of the second fundamental form are symmetric as in the isometric embedding problem. In fact, we will show that these conditions should be replaced by  \textit{generalized Cartan identities} that express how coefficients of the second fundamental form are related to each other, and of course, we recover the usual symmetry in the tangent bundle case. Another difficulty is the analogue of the  Bianchi identity of the curvature tensor. We will define  \textit{generalized Bianchi identities} relative to the covariantly closed  vector valued differential $p$-form and a \textit{generalized curvature tensor space} which corresponds, in the tangent bundle case, to the usual Bianchi identities and the Riemann curvature tensor space, respectively. Finally, besides the \textit{generalized Cartan identities} and \textit{generalized curvature tensor space}, we will make use of a  \textit{generalized Gauss map}. \\

The key to  the proof of Theorem \ref{CLGIEVBThm} is Lemma \ref{LemFond} for two main reasons: on one hand, it assures the existence of coefficients of the second fundamental form that satisfy the \textit{generalized Cartan identities} and the \textit{generalized Gauss equation}, properties that simplify the computation of the Cartan characters. On the other hand, the lemma gives the minimal required \textit{embedding codimension} $\kappa^n_{m,m-1}$ that ensures the desired embedding. Using Lemma \ref{LemFond}, we give  another proof of Theorem \ref{CLGIEVBThm} by an explicit construction of an ordinary integral flag. When the existence of integral manifold is established, we just need to project it on $\mathcal{M}\times\mathbb{R}^{n+\kappa^n_{m,p}}$.

\section{Generalities}

This section is a brief introduction to the Cartan--K\"{a}hler theory and is established to state the Cartan test, the Proposition \ref{PropCp} and the Cartan--K\"{a}hler theorem, results that we use in the proof of Theorem \ref{CLGIEVBThm}.

\subsection{EDS and exterior ideals}
Let us denote by $\mathcal{A}(\mathcal{M})$ the space of smooth differential forms on $\mathcal{M}$.\footnote{This is a graded algebra under the wedge product. We do not use the standard notation $\Omega (\mathcal{M})$ to not confuse it  with the curvature $2$-form of a connection.} An exterior differential system is a finite set of differential
forms $I=\{\omega_{1},\omega_{2},\dots , \omega_{k}\}\subset
\mathcal{A}(M)$ with which we associate the set of
equations $\{\omega_{i}=0 \,  | \, \omega_{i}\in I \}$.
 A subset of differential forms $\mathcal{I} \subset \mathcal{A}(\mathcal{M})$  is an exterior ideal if the exterior product of any differential form of $\mathcal{I}$ by a differential form of $\mathcal{A}(\mathcal{M})$ belongs to $\mathcal{I}$ and if the sum of any two differential forms of the
same degree belonging to $\mathcal{I}$, belongs also to $\mathcal{I}$.
The exterior ideal generated by $I$ is  the smallest exterior ideal
containing $I$. An exterior ideal is said to be   an exterior differential ideal if it is closed under the exterior differentiation and hence,  the
exterior differential ideal generated by an EDS is the smallest
exterior differential ideal containing the EDS. Let us notice that an EDS is closed if and only if the exterior differential ideal and the exterior ideal  generated by that EDS are  equal. In particular, if $I$ is an EDS,  $I\cup \mathrm{d}I$ is closed.

\subsection{Introduction to Cartan--K\"{a}hler theory}
Let $I\subset\mathcal{A}(\mathcal{M})$ be an EDS on $\mathcal{M}$ and
 let $\mathcal{N}$ be a submanifold of $\mathcal{M}$. The submanifold $\mathcal{N}$ is an integral manifold of  $I$ if  $\iota^{\ast}\varphi=0, \forall \varphi\in
I$, where $\iota$ is an embedding $\iota:\mathcal{N}\longrightarrow \mathcal{M}$. The purpose of this theory is to establish when a given EDS, which represents a PDE, has or does not have integral manifolds. We consider in this subsection, an $m$-dimensional real manifold $\mathcal{M}$ and $\mathcal{I}\subset \mathcal{A}(\mathcal{M})$ an exterior differential ideal on $\mathcal{M}$.

\begin{Def}Let $z\in \mathcal{M}$. A linear subspace  $E$ of $ T_{z}\mathcal{M}$ is an integral element of $\mathcal{I}$ if
$\varphi_{E}=0$ for all $\varphi \in \mathcal{I}$, where $\varphi_{E}$ means the evaluation of $\varphi$ on any basis of $E$. We denote by
$\mathcal{V}_{p}(\mathcal{I})$ the set of $p$-dimensional integral
elements of $\mathcal{I}$.
\end{Def}
$\mathcal{N}$ is an integral manifold of $\mathcal{I}$ if and only if each tangent space of  $\mathcal{N}$ is an integral element of $\mathcal{I}$. It is not hard to notice from the definition that a subspace of a given integral element is also an integral element.  We denote by $\mathcal{I}_{p}=\mathcal{I}\cap\mathcal{A}^{p}(\mathcal{M})$ the set of differential $p$-forms of $\mathcal{I}$. Thus, 
$\mathcal{V}_{p}(\mathcal{I})=\{E\in G_{p}(T\mathcal{M}) \, | \, \varphi_{E}=0$
for all $\varphi \in \mathcal{I}_{p} \}$.

\begin{Def}
Let $E$  be an integral element of $\mathcal{I}$. Let
$\{e_{1},e_{2},\dots ,e_{p}\}$ be a basis of  $E\subset T_{z}\mathcal{M}$. The
polar space of $E$, denoted by $H(E)$, is the vector space defined
as follows:
\begin{equation}
H(E)=\{v\in T_{z}\mathcal{M} \, |\,  \varphi(v, e_{1},e_{2},\dots , e_{p})=0
\text{ for all } \varphi \in \mathcal{I}_{p+1}\}.
\end{equation}
\end{Def}

Notice that $E\subset H(E)$.  The polar space plays an
important role in exterior differential system theory as we can notice from  the following proposition.

\begin{Prop}
Let $E \in \mathcal{V}_{p}(\mathcal{I})$ be a $p$-dimensional
integral element of
 $\mathcal{I}$. A $(p+1)$-dimensional vector space $E^{+}\subset T_{z}\mathcal{M}$ which contains $E$
 is an integral element of  $\mathcal{I}$ if and only if $E^{+}\subset H(E)$.
\end{Prop}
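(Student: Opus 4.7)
The plan is to unwind the two definitions in both directions, treating the polar space condition and the integral element condition as vanishing statements for $(p+1)$-forms evaluated on specific bases.

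For the forward implication, I would suppose $E^{+} \in \mathcal{V}_{p+1}(\mathcal{I})$ contains $E$, take an arbitrary vector $v \in E^{+}$, and show directly that $v \in H(E)$. Fix $\varphi \in \mathcal{I}_{p+1}$; the claim is that $\varphi(v, e_{1}, \dots , e_{p}) = 0$. If $v \in E$, antisymmetry of $\varphi$ together with the linear dependence among $v, e_{1}, \dots , e_{p}$ gives the vanishing. If $v \notin E$, then $\{v, e_{1}, \dots , e_{p}\}$ is a basis of $E^{+}$, and $\varphi_{E^{+}} = 0$ by hypothesis.

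For the reverse implication, suppose $E^{+} \subset H(E)$ with $E \subset E^{+}$ and $\dim E^{+} = p+1$. I would pick any $v \in E^{+} \setminus E$, so that $\{e_{1}, \dots , e_{p}, v\}$ is a basis of $E^{+}$. For any $\varphi \in \mathcal{I}_{p+1}$, evaluating $\varphi$ on this basis equals $\pm\,\varphi(v, e_{1}, \dots , e_{p})$, which is zero by the definition of $H(E)$ and $v \in H(E)$. Hence $\varphi_{E^{+}} = 0$ for every $\varphi \in \mathcal{I}_{p+1}$, so $E^{+} \in \mathcal{V}_{p+1}(\mathcal{I})$.

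The argument is a direct translation between the two definitions, and there is no substantial obstacle. The only point deserving mention is that, by multilinearity and antisymmetry of exterior forms, checking the vanishing on a single basis of $E^{+}$ suffices, which is why singling out a vector $v$ transverse to $E$ is enough to complete the reverse direction.
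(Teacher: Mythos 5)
Your argument is correct: it is the standard direct unwinding of the two definitions, and the case split in the forward direction (handling $v\in E$ by antisymmetry, $v\notin E$ by completing to a basis of $E^{+}$) together with the single transverse vector in the reverse direction is exactly what is needed. Note that the paper states this proposition as background from Cartan--K\"ahler theory without proof (deferring to \cite{Cartan-Book} and \cite{ExtDiffSys-Book}), so there is no in-paper argument to compare against; the only step you leave implicit is that membership in $\mathcal{V}_{p+1}(\mathcal{I})$ may be tested on $\mathcal{I}_{p+1}$ alone rather than on all of $\mathcal{I}$, which is licensed by the paper's own characterization $\mathcal{V}_{p+1}(\mathcal{I})=\{E\in G_{p+1}(T\mathcal{M})\mid \varphi_{E}=0 \text{ for all }\varphi\in\mathcal{I}_{p+1}\}$ (itself a consequence of $\mathcal{I}$ being an exterior ideal).
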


In order to check if a given $p$-dimensional integral element of an
EDS $\mathcal{I}$ is contained in a
$(p+1)$-dimensional integral element of  $\mathcal{I}$ , we
introduce the following function called the extension rank $
r:\mathcal{V}_{p}(\mathcal{I}) \longrightarrow
\mathbb{Z}$ that associates  each  integral element $E$ with an integer 
$r(E)=dim H(E)-(p+1)$. The extension rank  $r(E)$ is in fact the dimension of $\mathbb{P}(H(E)/E)$ and is always greater than $-1$. If $r(E)=-1$, then  $E$ is contained in any $(p+1)$-dimensional integral element of $\mathcal{I}$ and consequently, there is no hope of extending the integral element. An integral element E is said to be regular if $r(E)$ is constant on a  neighborhood of $z$.

\begin{Def}
An integral flag of  $\mathcal{I}$ on $z\in \mathcal{M}$ of length $n$ is a
sequence $(0)_{z}\subset E_{1}\subset E_{2}\subset \dots \subset
E_{n}\subset T_{z}\mathcal{M}$ of integral elements $E_{k}$ of $\mathcal{I}$.
\end{Def}
An integral element $E$ is said to be ordinary
if its base point $z\in \mathcal{M}$ is an ordinary zero of $I_{0}=I\cap
\mathcal{A}^{0}(\mathcal{M})$ and if there exists an integral flag
$(0)_{z}\subset E_{1}\subset E_{2}\subset \dots \subset E_{n}=E
\subset T_{z}\mathcal{M}$ where the $E_{k}$, $k=1,\dots ,(n-1)$ are
regular. Moreover, if  $E_{n}$ is itself
regular, then  $E$ is said  to be regular. We can now state the following important results of the Cartan--K\"{a}hler theory.

\begin{Thm}\label{TestCartan}(Cartan's test)
Let  $\mathcal{I}\subset \mathcal{A}^{\ast}(\mathcal{M})$ be  an exterior ideal
which does not contain  0-forms (functions on $\mathcal{M}$). Let  $(0)_{z}\subset E_{1}\subset
E_{2}\subset \dots \subset E_{n}\subset T_{z}\mathcal{M}$ be an integral flag
of $\mathcal{I}$. For any $k<n$, we denote by $C_{k}$ the
codimension of the polar space $H(E_{k})$ in $T_{z}\mathcal{M}$. Then
$\mathcal{V}_{n}(\mathcal{I})\subset G_{n}(T\mathcal{M})$ is at least of
codimension $C_{0}+C_{1}+\dots + C_{n-1}$ at $E_{n}$.
Moreover, $E_{n}$ is an ordinary integral flag if and only if
$E_{n}$ has a neighborhood $U$ in $G_{n}(T\mathcal{M})$ such that
$\mathcal{V}_{n}(\mathcal{I})\cap U$ is a manifold of
codimension $C_{0}+C_{1}+\dots + C_{n-1}$  in  $U$.
\end{Thm}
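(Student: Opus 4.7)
The plan is to reduce the statement to a local computation by introducing coordinates adapted to the flag. First I would fix a basis $(e_1, \ldots, e_m)$ of $T_z\mathcal{M}$ with $E_k = \mathrm{span}(e_1, \ldots, e_k)$ for $k = 1, \ldots, n$, together with coordinates $(x^1, \ldots, x^m)$ near $z$ satisfying $\partial/\partial x^i|_z = e_i$. Every $n$-plane in a neighborhood of $E_n$ in $G_n(T\mathcal{M})$ is then the graph of a matrix $P = (p^a_i)$ with $1 \le i \le n < a \le m$, spanned at a nearby basepoint $\tilde z$ by $\tilde e_i = \partial/\partial x^i + p^a_i \, \partial/\partial x^a$. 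Locally, $\mathcal{V}_n(\mathcal{I})$ is then the zero set of the functions
\[
F_{\varphi, I}(\tilde z, P) := \varphi_{\tilde z}\bigl(\tilde e_{i_1}, \ldots, \tilde e_{i_k}\bigr),
\]
where $\varphi$ ranges over $\mathcal{I}_k$ for $k \le n$ and $I = \{i_1 < \cdots < i_k\}$ over $k$-subsets of $\{1, \ldots, n\}$.

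The central step is to compute the linearization of these defining equations at $(z, 0)$. For the ``staircase'' subset $I = \{1, \ldots, k\}$ one finds
\[
\left.\frac{\partial F_{\varphi, \{1,\ldots,k\}}}{\partial p^a_k}\right|_{(z,0)} = \varphi(e_1, \ldots, e_{k-1}, e_a),
\]
which is precisely the polar equation of $E_{k-1}$ evaluated on the transverse direction $e_a$. As $\varphi$ varies over $\mathcal{I}_k$ and $a$ over $n+1, \ldots, m$, these linear forms cut out $H(E_{k-1}) \cap \mathrm{span}(e_{n+1}, \ldots, e_m)$; since $E_n \subset H(E_{k-1})$ this intersection has codimension exactly $C_{k-1}$ inside $\mathrm{span}(e_{n+1}, \ldots, e_m)$. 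Letting $k$ run from $1$ to $n$ and organizing the slope variables block by block produces a block upper-triangular Jacobian whose rank is $C_0 + C_1 + \cdots + C_{n-1}$, from which the codimension lower bound on $\mathcal{V}_n(\mathcal{I})$ at $E_n$ follows immediately.

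For the second, ``ordinary'', assertion the task is to upgrade this inequality to an equality and extract a manifold structure. The regularity of $E_0, \ldots, E_{n-1}$ forces the polar dimensions $\dim H(E_k)$ to be locally constant along nearby integral flags, which allows the implicit function theorem to be applied step by step: at stage $k$ the set of integral extensions of the previous stratum forms a smooth bundle whose fibre codimension is exactly $C_k$. Chaining these extensions produces a submanifold through $E_n$ whose codimension equals the lower bound from the first part. The main obstacle is the converse direction, namely that if $\mathcal{V}_n(\mathcal{I})$ is a manifold of the expected codimension near $E_n$, then each intermediate $E_k$ must in fact be regular. Here I would argue by upper-semicontinuity of the polar rank functions: a jump in $\dim H(E_k)$ at some intermediate stage would create additional integral flags in any neighborhood, producing an excess of dimension in $\mathcal{V}_n(\mathcal{I})$ and contradicting the assumed codimension equality at $E_n$.
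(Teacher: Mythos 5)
The paper does not prove this statement: Cartan's test is quoted in Section 2 as background, with the reader referred to Cartan's book and Chapter III of Bryant et al.\ for details, so there is no in-paper argument to compare yours against. Your proposal is essentially the standard proof from those references. The first half is sound: the staircase function $F_{\varphi,\{1,\dots,k\}}$ depends only on the slope variables $p^a_j$ with $j\leqslant k$, its derivative with respect to $p^a_k$ at $(z,0)$ is the polar form of $E_{k-1}$ evaluated on $e_a$, and since $E_n\subset H(E_{k-1})$ these forms retain rank $C_{k-1}$ after restriction to $\mathrm{span}(e_{n+1},\dots,e_m)$; the resulting block-triangular Jacobian gives the codimension lower bound. (This computation is precisely what the paper's Proposition \ref{PropCp} packages for later use.)

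The ``moreover'' half is where your sketch has genuine gaps. For the forward implication, the implicit function theorem only shows that the locus cut out by your $C_0+\dots+C_{n-1}$ selected equations is a manifold of that codimension; you must still show that every $n$-plane in this locus is an integral element, i.e., that \emph{all} the remaining defining equations of $\mathcal{V}_n(\mathcal{I})$ vanish there. That is the substantive step, it is exactly where the regularity of $E_1,\dots,E_{n-1}$ is used (one extends integral elements a dimension at a time and verifies the extension stays integral), and your sketch skips it. For the converse, your semicontinuity argument points the wrong way: $\dim H(E_k)$ is \emph{upper} semicontinuous, so failure of regularity produces nearby integral flags with \emph{smaller} polar spaces, hence \emph{larger} $C_k$ and a \emph{larger} codimension lower bound --- a dimension deficit in $\mathcal{V}_n(\mathcal{I})$, not an ``excess of dimension.'' The correct contradiction comes from applying the first part of the theorem to flags subordinate to nearby elements of $\mathcal{V}_n(\mathcal{I})\cap U$ (which exist because subspaces of integral elements are integral) and comparing the strictly larger bound obtained there with the constant codimension of the assumed manifold.
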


The numbers $C_{k}$ are called Cartan characters of the $k$-integral element. The following proposition is useful in the applications. It allows us to compute the Cartan characters of the constructed flag in the proof of the Theorem \ref{CLGIEVBThm}.

\begin{Prop}\label{PropCp}

At a point $z\in \mathcal{M}$,  let $E$ be an $n$-dimensional integral element of an exterior ideal $\mathcal{I}\cap \mathcal{A}^{\ast}(\mathcal{M})$ which does not contain differential 0-forms.  Let $\omega_{1},\omega_{2},\dots ,
\omega_{n},\pi_{1},\pi_{2}, \dots , \pi_{s}$ (where $s=dim\, \mathcal{M} - n$) be a coframe in a open neighborhood of  $z\in M$ such that $E=\{v
\in T_{z}\mathcal{M}\,  | \, \pi_{a}(v)=0 \text{ for all } a=1,\dots ,  s\}$. For
all  $p\leqslant n$, we define $E_{p}=\{v\in E \, | \, \omega_{k}(v)=0 \text{
for all } k > p \}$. Let $\{\varphi_{1}, \varphi_{2},\dots
,\varphi_{r}\}$ be the set of differential forms which generate the
exterior ideal  $\mathcal{I}$, where $\varphi_{\rho}$ is of degree $(d_{\rho}+1)$.
For all $\rho$, there exists an expansion
\begin{equation}
\varphi_{\rho}=\sum_{|J|=d_{\rho}}\pi_{\rho}^{J}\wedge\omega_{J}+\tilde{\varphi}_{\rho}
\end{equation}
where the   1-forms $\pi_{\rho}^{J}$ are linear combinations of
the forms $\pi$ and the terms $\tilde{\varphi}_{\rho}$ are, either
of degree  2 or more on  $\pi$, or vanish at $z$.
Moreover, we have
\begin{equation}
H(E_{p})=\{v\in T_{z}\mathcal{M} | \pi_{\rho}^{J}(v)=0 \text{ for all } \rho
\text{ and } \sup J\leqslant p\}
\end{equation}
In particular, for the integral flag $(0)_{z}\subset E_{1}\subset
E_{2}\subset\dots \subset E_{n}\cap T_{z}M$ of $\mathcal{I}$,
the Cartan characters $C_{p}$ correspond to the number of linear independent  forms
$\{\pi_{\rho}^{J}|_{z} \text{ such that } \sup J \leqslant p\}.$
\end{Prop}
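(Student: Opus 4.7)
My plan is to establish the expansion first and then deduce the polar-space formula and the Cartan character count by direct evaluation on an adapted basis of $E$. Since $E$ is defined by $\pi_1=\cdots=\pi_s=0$, the restrictions $\omega_i|_E$ form a basis of $E^*$; let $(e_1,\ldots,e_n)$ be the dual basis, so $\omega_i(e_k)=\delta_{ik}$, $\pi_a(e_k)=0$, and $E_p=\mathrm{span}(e_1,\ldots,e_p)$. Every $(d_\rho+1)$-form near $z$ has a unique decomposition according to its $\pi$-degree, i.e.\ the number of $\pi_a$ factors appearing in each coframe monomial. I would collect the $\pi$-degree-one terms as $\sum_{|J|=d_\rho}\pi_\rho^J\wedge\omega_J$ with each $\pi_\rho^J$ a linear combination of the $\pi_a$, and place the $\pi$-degree $\geq 2$ terms into $\tilde\varphi_\rho$. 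The remaining pure-$\omega$ part, evaluated on any $(d_\rho+1)$-tuple in $E$, equals the restriction $\varphi_\rho|_E$ at $z$; since $E$ is an integral element this is zero, forcing all its coefficients to vanish at $z$, so it too may be absorbed into $\tilde\varphi_\rho$.

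For the polar-space description, fix $v\in T_z\mathcal{M}$ and an increasing multi-index $I=(i_1<\cdots<i_{d_\rho})\subset\{1,\ldots,p\}$ and evaluate $\varphi_\rho(v,e_{i_1},\ldots,e_{i_{d_\rho}})$ at $z$. The $\tilde\varphi_\rho$ contribution vanishes: any $\pi$-degree $\geq 2$ monomial needs two arguments with nonzero $\pi$-values, but only $v$ can qualify, while the absorbed pure-$\omega$ piece vanishes at $z$. In the main sum, the wedge expansion of $(\pi_\rho^J\wedge\omega_J)(v,e_{i_1},\ldots,e_{i_{d_\rho}})$ collapses to its first term because $\pi_\rho^J(e_{i_k})=0$, and $\omega_J(e_{i_1},\ldots,e_{i_{d_\rho}})=\delta_{J,I}$; thus $\varphi_\rho(v,e_{i_1},\ldots,e_{i_{d_\rho}})=\pi_\rho^I(v)$ at $z$.

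It remains to pass from the generators $\varphi_\rho$ to the full set $\mathcal{I}_{p+1}$. Any element of $\mathcal{I}_{p+1}$ is a sum of terms $\alpha\wedge\varphi_\rho$ with $\deg\alpha=p-d_\rho\geq 0$; expanding $(\alpha\wedge\varphi_\rho)(v,e_1,\ldots,e_p)$ as a signed shuffle sum, the shuffles that place $v$ inside the $\alpha$-block evaluate $\varphi_\rho$ on a subtuple of $E_p$ and vanish by integrality of $E_p$, while the remaining shuffles produce linear combinations of the values $\pi_\rho^{(j_1,\ldots,j_{d_\rho})}(v)$ with $\{j_1,\ldots,j_{d_\rho}\}\subset\{1,\ldots,p\}$. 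Hence $H(E_p)$ is cut out precisely by the covectors $\{\pi_\rho^J|_z:\sup J\leq p\}$, and $C_p=\mathrm{codim}_{T_z\mathcal{M}}H(E_p)$ is the number of linearly independent covectors among them.

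The main bookkeeping difficulty is the reduction from arbitrary elements of $\mathcal{I}_{p+1}$ to the generators $\varphi_\rho$ in the third paragraph: one must verify that wedging in lower-degree factors $\alpha$ introduces no new polar conditions beyond those already imposed by the $\varphi_\rho$, and this relies essentially on $E_p$ being an integral element so that the unwanted shuffles vanish identically.
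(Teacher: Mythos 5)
Your argument is correct. Note that the paper does not prove this proposition at all: it is recalled as background from the Cartan--K\"ahler theory, with the reader referred to \cite{Cartan-Book} and Chapter III of \cite{ExtDiffSys-Book}, and your proof is essentially the standard one found there (decomposition by $\pi$-degree, absorption of the pure-$\omega$ part using integrality of $E$, and the shuffle computation showing that wedging generators with lower-degree factors yields no polar conditions beyond the $\pi_\rho^J(v)$). The only point worth making explicit is the reverse inclusion in your claim that $H(E_p)$ is cut out ``precisely'' by the $\pi_\rho^J|_z$ with $\sup J\leqslant p$: each such covector is indeed realized as a polar equation of $E_p$ by taking $\alpha=\omega_K$ with $K=\{1,\dots,p\}\smallsetminus J$ in your shuffle expansion, which collapses to $\pm\pi_\rho^J(v)$; this is immediate from your paragraph-two computation but should be stated.
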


The following theorem is of  great importance not only because it is a  generalization of the well-known Frobenius theorem but also because it represents a generalization of the  Cauchy--Kovalevskaya theorem.

\begin{Thm}\label{ThCartanKahler}(Cartan--K\"{a}hler)\\
Let $\mathcal{I}\subset \mathcal{A}^{\ast}(\mathcal{M})$ be a real analytic
exterior differential ideal. Let $\mathcal{X} \subset \mathcal{M}$ be a $p$-dimensional connected real analytic K\"{a}hler-regular integral manifold of $\mathcal{I}$.
Suppose that $r=r(\mathcal{X})\geqslant 0$. Let  $\mathcal{Z}\subset \mathcal{M}$ be a real analytic
submanifold of $\mathcal{M}$ of  codimension $r$ which contains  $\mathcal{X}$ and such
that  $T_{x}\mathcal{Z}$ and
$H(T_{x}\mathcal{X})$ are transverse in  $T_{x}\mathcal{M}$ for all  $x\in \mathcal{X} \subset \mathcal{M}$.
There exists then a $(p+1)$-dimensional connected real
analytic integral manifold $\mathcal{Y}$ of $\mathcal{I}$, such that
$\mathcal{X}\subset \mathcal{Y} \subset \mathcal{Z}$. Moreover, $\mathcal{Y}$
is unique in the sense that another integral manifold of
$\mathcal{I}$ having the stated properties, coincides
with $\mathcal{Y}$ on an open neighborhood of  $\mathcal{X}$.
\end{Thm}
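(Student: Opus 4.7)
The plan is to reduce the theorem to the Cauchy--Kovalevskaya theorem by describing the desired $\mathcal{Y}$ as the graph of an analytic function, with initial data given by $\mathcal{X}$. First I would set up adapted analytic coordinates near a fixed point $x_0\in \mathcal{X}$. Using the Kähler-regularity of $\mathcal{X}$ and the hypothesis $r(\mathcal{X})\geqslant 0$, one has $\dim H(T_{x}\mathcal{X})=p+1+r$ with constant $r$ along $\mathcal{X}$. The transversality of $T_x\mathcal{Z}$ and $H(T_x\mathcal{X})$ in $T_x\mathcal{M}$, combined with the codimension count, then forces $H(T_x\mathcal{X})\cap T_x\mathcal{Z}$ to be a $(p+1)$-dimensional integral element containing $T_x\mathcal{X}$. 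Concretely, I would pick coordinates $(t^{1},\dots,t^{p},u,s^{1},\dots,s^{r})$ on $\mathcal{Z}$ so that $\mathcal{X}=\{u=0,\, s^{a}=0\}$ and so that $\partial/\partial t^{1},\dots,\partial/\partial t^{p},\partial/\partial u$ span $H(T_{x_0}\mathcal{X})\cap T_{x_0}\mathcal{Z}$.

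Next I would use Proposition~\ref{PropCp} to put the generators $\varphi_{\rho}$ of $\mathcal{I}$ in the normal form $\varphi_{\rho}=\sum_{|J|=d_{\rho}}\pi^{J}_{\rho}\wedge\omega_{J}+\tilde{\varphi}_{\rho}$, where the $\omega$'s correspond to $(dt^{1},\dots,dt^{p},du)$ and the $\pi$'s correspond to the $ds^{a}$. The integral-element conditions along the sought hypersurface $\mathcal{Y}=\{s^{a}=f^{a}(t,u)\}$ inside $\mathcal{Z}$ then translate, via the Cartan test (Theorem~\ref{TestCartan}) applied to the flag $E_{p}=T\mathcal{X}\subset E_{p+1}=T\mathcal{Y}$, into a determined first-order system
\begin{equation}
\frac{\partial f^{a}}{\partial u}=F^{a}\!\left(t,u,f,\frac{\partial f}{\partial t}\right), \qquad f^{a}(t,0)=0,
\end{equation}
with real analytic right-hand sides $F^{a}$. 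By the Cauchy--Kovalevskaya theorem, a unique analytic solution $f=(f^{a})$ exists on a neighborhood of $\mathcal{X}$ in $\mathcal{Z}$, and this solution defines the candidate extension $\mathcal{Y}$.

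The main obstacle, and the step that requires genuine care, is the verification that every form in $\mathcal{I}$, and not merely the ones explicitly imposed by the Cauchy problem, pulls back to zero on $\mathcal{Y}$. By construction the pullbacks $\iota_{\mathcal{Y}}^{\ast}\varphi_{\rho}$ vanish on the sub-hyperplane spanned by $\partial/\partial t^{1},\dots,\partial/\partial t^{p}$ at each point, and one also has $\iota_{\mathcal{X}}^{\ast}\varphi_{\rho}=0$ because $\mathcal{X}$ is already an integral manifold. To deduce the full vanishing, I would form the components $g_{\rho}=(\iota_{\mathcal{Y}}^{\ast}\varphi_{\rho})(\,\partial/\partial u,\,\cdot\,,\dots,\cdot\,)$, differentiate in $u$, and exploit the crucial hypothesis that $\mathcal{I}$ is a \emph{differential} ideal: $d\varphi_{\rho}\in \mathcal{I}$ provides linear relations expressing $\partial g_{\rho}/\partial u$ as a homogeneous linear combination of the $g_{\sigma}$'s with analytic coefficients. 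This is once again a Cauchy--Kovalevskaya system, now with vanishing initial data along $u=0$, and the uniqueness part forces $g_{\rho}\equiv 0$.

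Finally, the uniqueness of $\mathcal{Y}$ on a neighborhood of $\mathcal{X}$ follows directly from the uniqueness clause of Cauchy--Kovalevskaya applied to the same analytic initial-value problem, since any other analytic extension of the required dimension contained in $\mathcal{Z}$ would have to solve the very same equation with the very same initial data on $\mathcal{X}$. The closedness hypothesis on $\mathcal{I}$, the transversality of $\mathcal{Z}$ with the polar spaces, and the analyticity of all data enter in an essential way; dropping any of them breaks either the reduction to a Cauchy--Kovalevskaya system or the propagation argument that promotes a tangent-space condition to true integrality of $\mathcal{Y}$.
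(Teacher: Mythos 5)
The paper does not actually prove this theorem: it is quoted in Section 2 as background, with the proof explicitly deferred to Cartan's book \cite{Cartan-Book} and Chapter III of \cite{ExtDiffSys-Book}, so there is no in-paper argument to compare yours against. Your sketch is essentially the standard proof from those references: adapted analytic coordinates in which $\mathcal{Y}$ is sought as a graph inside $\mathcal{Z}$ over a collar of $\mathcal{X}$; the dimension count $\dim\bigl(H(T_{x}\mathcal{X})\cap T_{x}\mathcal{Z}\bigr)=(p+1+r)+(\dim\mathcal{M}-r)-\dim\mathcal{M}=p+1$, which singles out the unique extension direction; Cauchy--Kovalevskaya for existence and uniqueness; and the propagation argument exploiting $\mathrm{d}\mathcal{I}\subset\mathcal{I}$ to upgrade the solution of the determined subsystem to a genuine integral manifold of the whole ideal. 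You correctly isolate that last step as the one carrying the real content. Two points are asserted rather than established and constitute the technical body of the proof in the references. First, your coordinate count is off: $\mathcal{Z}$ has dimension $\dim\mathcal{M}-r$, so the graph involves $\dim\mathcal{M}-r-(p+1)$ unknown functions $f^{a}$, not $r$ of them; the point is that this number coincides with the number $C_{p}=\dim\mathcal{M}-(p+1)-r$ of independent polar equations, which is exactly what makes the evolution system determined, and it is K\"{a}hler-regularity that guarantees these polar equations have constant rank with analytic coefficients so that, after using transversality with $T\mathcal{Z}$, they can be solved for the $\partial f^{a}/\partial u$. Second, the bookkeeping in the propagation step --- decomposing $\iota_{\mathcal{Y}}^{\ast}\varphi_{\rho}$ into its $du$-component and its purely $dt$-component, and verifying that the identity $\iota_{\mathcal{Y}}^{\ast}(\mathrm{d}\varphi_{\rho})=\mathrm{d}(\iota_{\mathcal{Y}}^{\ast}\varphi_{\rho})$ really yields a linear homogeneous Cauchy--Kovalevskaya system with vanishing initial data for \emph{all} the obstruction components, not only the ones contracted with $\partial/\partial u$ --- is where most of the work lives. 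Neither issue undermines the architecture of your argument; as a reconstruction of the cited proof it is sound.
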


The analycity condition of the exterior differential ideal is crucial because of the requirements in the Cauchy--Kovalevskaya theorem used in the Cartan--K\"{a}hler theorem's proof. It has an important corollary. Actually, in the application, this corollary is more often  used than the theorem and is sometimes called  \textit{the Cartan--K\"{a}hler theorem} in literature.

\begin{Cor}\label{CorCartanKahler}(Cartan--K\"{a}hler)\\
Let $\mathcal{I}$ be an analytic exterior differential ideal on a manifold
$\mathcal{M}$. If $E\subset T_{z}M$ is an ordinary integral element of
$\mathcal{I}$, there exists an integral manifold of
$\mathcal{I}$ passing through $z$ and having $E$ as a tangent
space at  point $z$.
\end{Cor}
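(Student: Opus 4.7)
Since $E$ is an ordinary integral element at $z$, unwinding the definition provides an integral flag $(0)_{z} = E_{0}\subset E_{1}\subset \cdots \subset E_{n} = E$ of $\mathcal{I}$ in which $z$ is an ordinary zero of $\mathcal{I}\cap \mathcal{A}^{0}(\mathcal{M})$ and each $E_{k}$ for $k=0,1,\dots , n-1$ is regular. My plan is to build, by induction on $k$, a nested sequence of connected real analytic integral manifolds $\mathcal{X}_{0}\subset \mathcal{X}_{1}\subset \cdots \subset \mathcal{X}_{n}$ of $\mathcal{I}$, all passing through $z$, with $\dim \mathcal{X}_{k}=k$ and $T_{z}\mathcal{X}_{k}=E_{k}$. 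The desired integral manifold is then $\mathcal{X}_{n}$.

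For the base step I would simply set $\mathcal{X}_{0}=\{z\}$: the fact that $z$ is an ordinary zero of the $0$-forms in $\mathcal{I}$ guarantees that $\{z\}$ is an integral manifold of $\mathcal{I}$ in the required sense, and $T_{z}\{z\}=(0)_{z}=E_{0}$ trivially.

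For the inductive step, suppose $\mathcal{X}_{k}$ has been constructed with $T_{z}\mathcal{X}_{k}=E_{k}$ and $k<n$. Regularity of $E_{k}$ means the extension rank $r$ is locally constant near $z$, so after shrinking $\mathcal{X}_{k}$ it becomes K\"ahler-regular with constant extension rank $r(E_{k})=\dim H(E_{k})-(k+1)\geqslant 0$, the inequality holding since $E_{k+1}\subset H(E_{k})$ is $(k+1)$-dimensional. Using that $E_{k+1}\subset H(E_{k})$ and $T_{z}\mathcal{X}_{k}=E_{k}\subset E_{k+1}$, a straightforward linear algebra argument lets me pick a real analytic submanifold $\mathcal{Z}_{k+1}\subset \mathcal{M}$ of codimension $r(E_{k})$ that contains $\mathcal{X}_{k}$, whose tangent space at $z$ contains $E_{k+1}$, and which is transverse to $H(E_{k})$ along $\mathcal{X}_{k}$. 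Theorem \ref{ThCartanKahler} then yields a connected real analytic integral manifold $\mathcal{X}_{k+1}$ of dimension $k+1$ with $\mathcal{X}_{k}\subset \mathcal{X}_{k+1}\subset \mathcal{Z}_{k+1}$. Its tangent $T_{z}\mathcal{X}_{k+1}$ is an integral element containing $E_{k}$, hence contained in $H(E_{k})$, and also contained in $T_{z}\mathcal{Z}_{k+1}$. A dimension count from the transversality condition gives $\dim(H(E_{k})\cap T_{z}\mathcal{Z}_{k+1})=k+1$, and since $E_{k+1}$ is contained in both subspaces, it follows that $T_{z}\mathcal{X}_{k+1}=E_{k+1}$, completing the induction.

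The main obstacle is the inductive step: at each level one must simultaneously arrange three constraints on the auxiliary slice $\mathcal{Z}_{k+1}$ (containing the already built $\mathcal{X}_{k}$, having the precise codimension $r(E_{k})$ prescribed by the theorem, and being transverse to $H(E_{k})$ with tangent space containing $E_{k+1}$). The propagation of these constraints from step to step is what forces the use of the regularity of each $E_{k}$ with $k<n$, while the fact that $E_{n}$ itself need not be regular is harmless because the induction stops at step $n$.
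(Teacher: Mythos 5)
The paper states this corollary without proof, deferring to \cite{Cartan-Book} and \cite{ExtDiffSys-Book}; your induction along the ordinary flag, applying Theorem \ref{ThCartanKahler} at each stage with a restraining manifold $\mathcal{Z}_{k+1}$ of codimension $r(E_{k})$ chosen so that $T_{z}\mathcal{Z}_{k+1}\cap H(E_{k})=E_{k+1}$, is exactly the standard derivation found in those references and is correct. The points you flag (shrinking $\mathcal{X}_{k}$ to keep Kähler-regularity, $r(E_{k})\geqslant 0$ from $E_{k+1}\subset H(E_{k})$, and the dimension count identifying $T_{z}\mathcal{X}_{k+1}$ with $E_{k+1}$) are precisely the ones that need care, and you handle them properly.
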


One of  the great applications of the Cartan--K\"{a}hler theory is the Cartan--Janet theorem concerning  the local isometric embedding of Riemannian manifolds. We mention this theorem for its historical importance and  because the result of this paper generalizes isometric embedding of surfaces.

\begin{Thm}\label{ThBCJS}(Cartan \cite{Cartan-Art}--Janet \cite{Janet-Art})\\
Every $m$-dimensional real analytic Riemannian manifold can be
locally embedded isometrically in an
$\displaystyle{\frac{m(m+1)}{2}}$-dimensional Euclidean space.
\end{Thm}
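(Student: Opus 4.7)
The plan is to reformulate the isometric embedding problem as an exterior differential system on a frame bundle and apply the Cartan--K\"{a}hler theorem. Fix an analytic orthonormal coframe $(\eta^i)$ on $\mathcal{M}$ with Levi--Civita connection forms $(\eta^i_j)$ satisfying $\mathrm{d}\eta^i + \eta^i_j \wedge \eta^j = 0$ and $\mathrm{d}\eta^i_j + \eta^i_k \wedge \eta^k_j = \tfrac{1}{2}R^i_{jkl}\,\eta^k\wedge\eta^l$. Setting $N = m(m+1)/2$, on the orthonormal frame bundle of $\mathbb{R}^N$ the tautological and connection forms $(\omega^i,\omega^a,\omega^i_j,\omega^a_i)$ obey the flat Cartan structure equations $\mathrm{d}\omega^A = -\omega^A_B\wedge\omega^B$ and $\mathrm{d}\omega^A_B = -\omega^A_C\wedge\omega^C_B$. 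On the product $\Sigma = \mathcal{M}\times\mathbb{R}^N\times SO(N)$, with indices $i\in\{1,\dots,m\}$ tangential and $a\in\{m+1,\dots,N\}$ normal, I would introduce the naive EDS
\begin{equation*}
I = \{\omega^i - \eta^i,\ \omega^a,\ \omega^i_j - \eta^i_j,\ \omega^a_i\wedge\eta^i\};
\end{equation*}
any $m$-dimensional integral manifold on which $\eta^1\wedge\cdots\wedge\eta^m\neq 0$ then projects to an isometric immersion $u\colon\mathcal{M}\to\mathbb{R}^N$ adapted to the moving frame.

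The next step is to close $I$ under exterior differentiation. Modulo $I$ one finds $\mathrm{d}(\omega^i-\eta^i)\equiv 0$, $\mathrm{d}\omega^a\equiv -\omega^a_i\wedge\eta^i\in I$, and crucially
\begin{equation*}
\mathrm{d}(\omega^i_j - \eta^i_j)\equiv -\omega^a_i\wedge\omega^a_j + \tfrac{1}{2}R^i_{jkl}\,\eta^k\wedge\eta^l\pmod{I},
\end{equation*}
so the closed ideal $\mathcal{I}$ must include the Gauss 2-forms. On an admissible integral element Cartan's lemma applied to $\omega^a_i\wedge\eta^i=0$ gives $\omega^a_i = h^a_{ij}\eta^j$ with $h^a_{ij} = h^a_{ji}$, and the vanishing of the Gauss 2-forms becomes the algebraic identity $R_{ijkl} = \sum_a (h^a_{ik}h^a_{jl} - h^a_{il}h^a_{jk})$. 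The central algebraic input, due to Janet, is that for every curvature tensor this quadratic system admits a solution with $a$ ranging over $m(m-1)/2$ values; this is precisely what forces $N=m(m+1)/2$.

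With integral elements parametrized, I would construct an ordinary integral flag $E_1\subset\cdots\subset E_m$ adapted to the coframe by specifying, at each step, the slopes of $\omega^a_i$ along the newly added direction. Proposition~\ref{PropCp} allows one to read off the Cartan characters $C_0,C_1,\dots,C_{m-1}$ from the expansion of the generators in a $\pi$-coframe, and the Cartan test, Theorem~\ref{TestCartan}, amounts to verifying that $C_0+\cdots+C_{m-1}$ equals the codimension of $\mathcal{V}_m(\mathcal{I})$ in $G_m(T\Sigma)$ at the chosen flag. Once ordinariness is established, Corollary~\ref{CorCartanKahler} delivers a real analytic $m$-dimensional integral manifold through the base point, whose projection to $\mathbb{R}^N$ is the desired local isometric embedding.

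I expect the principal obstacle to be the algebraic Gauss step: one must show that any algebraic curvature tensor can be written as a sum of $m(m-1)/2$ Kulkarni--Nomizu-type products of symmetric 2-tensors, and that this realization is flexible enough for the Cartan characters computed via Proposition~\ref{PropCp} to line up with the codimension of $\mathcal{V}_m(\mathcal{I})$ predicted by Cartan's test. Verifying the test then reduces to a careful bookkeeping of slope 1-forms level by level, after which the real analyticity hypothesis lets Cartan--K\"{a}hler conclude without further work.
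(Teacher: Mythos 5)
The paper offers no proof of this theorem: it is quoted for historical context with references to Cartan and Janet, and the paper's own machinery (Theorem \ref{CLGIEVBThm} with $\mathbb{V}=\mathrm{T}\mathcal{M}$, $\phi$ the tautological $1$-form, $p=m-1$) only recovers the case $m=2$, where $n+\kappa^2_{2,1}=3=m(m+1)/2$. So there is nothing in the text to compare you against line by line; what can be said is that your outline is the standard Cartan--K\"ahler proof of Cartan--Janet (the one in the cited sources and in \cite{ExtDiffSys-Book}), and it is the same architecture that the paper transplants to the generalized setting: naive EDS on $\mathcal{M}\times\mathcal{F}(\mathbb{R}^N)$, closure under $\mathrm{d}$ producing the Gauss $2$-forms, an algebraic lemma about the Gauss map, an ordinary integral flag, and the Cartan test. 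Your identification of $N-m=m(m-1)/2$ normal directions and of Cartan's lemma giving symmetric $h^a_{ij}$ (which, as the paper stresses, is exactly what fails in the generalized problem and must be replaced by the generalized Cartan identities) is correct.

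That said, as written this is a plan rather than a proof: the two load-bearing steps are named but not carried out. First, the algebraic Gauss step is not merely that every curvature tensor is a sum of $m(m-1)/2$ wedge-squares of symmetric forms; what the Cartan test actually requires is that the map $h\mapsto\bigl(\sum_a(h^a_{ik}h^a_{jl}-h^a_{il}h^a_{jk})\bigr)$ be a \emph{submersion} onto the space of algebraic curvature tensors on an open set of nondegenerate $h$, so that $\mathcal{V}_m(\mathcal{I})$ is a smooth manifold of the predicted dimension near the chosen flag (compare the role of Lemma \ref{LemFond} and the homogeneity argument $\mathcal{G}(\rho H)=\rho^2\mathcal{G}(H)$ in the paper). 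Second, the Cartan characters $C_0,\dots,C_{m-1}$ and the codimension of $\mathcal{V}_m(\mathcal{I})$ in $G_m(T\Sigma)$ must actually be computed and shown equal; this bookkeeping is where the specific value $N=m(m+1)/2$ is certified as sufficient, and you defer it entirely. Neither step is routine, so the proposal should be read as a correct reduction of the theorem to these two verifications, not as a proof of it.
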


\section{Construction of Conservation Laws}

In this section, we continue to explain   the general strategy of solving the problem in the general case started in the introduction,  and  we give a complete proof of Theorem \ref{CLGIEVBThm}. In the introduction, we showed that solving the general problem is equivalent to looking  for the existence of integral manifolds of the naive EDS $\{ \omega^{i}_{j} - \eta^{i}_{j} , \omega^{a}_{i}\wedge\phi^{i} \}$ on the product manifold 
\begin{equation}\nonumber
\mathbf{\Sigma}^n_{m,p} = \mathcal{M}_{m}\times \frac{SO(n+\kappa^n_{m,p})}{SO(\kappa^n_{m,p})}.
\end{equation}

This naive EDS is not closed. Indeed, the  generalized torsion-free of the connection implies that $\mathrm{d}(\omega^{a}_{i}\wedge\phi^{i}) \equiv 0 $ modulo the naive EDS, but the Cartan's second-structure equation yields to  $\displaystyle{\mathrm{d}(\omega^{i}_{j} - \eta^{i}_{j}) \equiv \sum_{a}(\omega^{a}_{i}\wedge\omega^{a}_{j}) - \Omega^{i}_{j}}$ modulo the naive EDS, where $\Omega = (\Omega^{i}_{j})$ is the curvature 2-form of the connection. Consequently, the exterior ideal that we now consider on  the product manifold $\mathbf{\Sigma}^n_{m,p}$ is 
\begin{equation}
\mathcal{I}^n_{m,p} = \{ \omega^{i}_{j} - \eta^{i}_{j} ,  \sum_{a}\omega^{a}_{i}\wedge\omega^{a}_{j} - \Omega^{i}_{j} ,  \omega^{a}_{i}\wedge\phi^{i} \}_{\text{alg}}.
\end{equation}

The curvature 2-form of the connection is an $\mathfrak{o}(n)$-valued two form and is related to the connection 1-form $(\eta^{i}_{j})$ by the Cartan's second-structure equation:
\begin{equation}\label{CartanStructureEq}
\Omega^{i}_{j}= d\eta^{i}_{j} + \eta^{i}_{k}\wedge\eta^{k}_{j}
\end{equation}
A first covariant derivative of $\phi$ has led to the generalized torsion. A second covariant derivative of $\phi$ gives rise to  \textit{generalized Bianchi identities}\footnote{In the tangent bundle case and $\phi = E_{i}\eta^{i}$, we recover the standard Bianchi identities of the Riemann curvature tensor, i.e, $\mathcal{R}^{i}_{jkl} =\mathcal{R}^{k}_{lij}$ and  $\mathcal{R}^{i}_{jkl} + \mathcal{R}^{i}_{ljk}  + \mathcal{R}^{i}_{klj}  = 0 $ .} as follows:
\begin{equation}
\mathrm{d}_{\nabla}^2(\phi) = 0 \Longleftrightarrow \Omega^{i}_{j}\wedge\phi^{j} = 0 \text{ for all } i = 1, \dots , n.
\end{equation}
The conditions  $\Omega^{i}_{j}\wedge \phi^{i}=0$ for all $i = 1, \dots , n$ are called \textit{generalized Bianchi identities}. We then define  a generalized curvature tensor space $\mathcal{K}^n_{m,p}$ as the space of curvature tensor satisfying the \textit{generalized Bianchi identities}:
\begin{equation}
\mathcal{K}^n_{m,p}=\{ (\mathcal{R}^{i}_{j;\lambda \mu}) \in \wedge^2(\mathbb{R}^n)\otimes\wedge^2(\mathbb{R}^m) | \, \Omega^{i}_{j}\wedge\phi^{i} = 0\}
\end{equation}
where $\Omega^{i}_{j} = \frac{1}{2}\mathcal{R}^{i}_{j;\lambda\mu}\eta^\lambda\wedge\eta^\mu = \mathcal{R}^{i}_{j;\lambda\mu}\eta^\lambda\otimes\eta^\mu$.

In the tangent bundle case and $\phi = E_{i}\eta^{i}$, $\mathcal{K}^n_{n,1}$ is the Riemann curvature tensor space which is of dimension $\displaystyle{\frac{1}{12}m^2(m^2-1)}$. 
\\

%We are now in position to prove proposition \ref{ObstructionProp}. stated in the introduction. When $p=m-1$, generalized Bianchi identities are trivial, i.e., $\mathcal{K}^n_{m,m-1} =\wedge^2(\mathbb{R}^n)\otimes\wedge^2(\mathbb{R}^m)$. The proposition \ref{ObstructionProp}. then holds. When $p\leqslant m-2$ and    $\displaystyle{\frac{n(n-1)}{2}.\frac{m(m-1)}{2} - \frac{n.m!}{(p+2)!(m-p-2)!} \leqslant 0}$, then there are no solutions in general to our question. Indeed, $\Omega^{i}_{j}\wedge\phi^j$ is a differential  $(p+2)$-form on $\mathcal{M}$. When $p\leqslant m-2 $,  the \textit{generalized Bianchi identities} is a system of $\displaystyle{ \frac{n.m!}{(p+2)!(m-p-2)!}}$ equations on the curvature tensor $(\mathcal{R}^{i}_{j;\lambda\mu})$. Therefore, the positivity of the necessary condition expresses the fact that there are less constraints  than the dimension of the curvature tensors.  \\

All the data are analytic, we can apply the Cartan--K\"{a}hler theory if we are able to check the involution of  the exterior differential system by constructing an $m$-integral flag: If the exterior ideal $\mathcal{I}^n_{m,p}$ passes the Cartan test, the flag is then ordinary and by the Cartan--K\"{a}hler theorem, there exist integral manifolds of $\mathcal{I}^n_{m,p}$. To be able to project the product manifold $\mathbf{\Sigma}^n_{m,p}$ on $\mathcal{M}$, we also need to show  the existence of $m$-dimensional integral manifolds on which the volume form on $\eta^{1,\dots , m}$ on  $\mathcal{M}$ does not vanish. 
\\ 
 
The EDS is not involutive and hence we ''prolong'' it by introducing new variables. Let us express the  $1$-forms $\omega^{a}_{i}$ in  the coframe $(\eta^1, \dots , \eta^m)$ in order to later make  the computation of Cartan characters easier . Let $\mathcal{W}^n_{m,p}$ be an $\kappa^n_{m,p}$-dimensional Euclidean space. We then write $\omega^{a}_{i} = H^a_{i\lambda}\eta^{\lambda}$ where $H^{a}_{i\lambda}\in \mathcal{W}^n_{m,m-1}\otimes \mathbb{R}^n\otimes\mathbb{R}^m$ and define the forms $\pi^{a}_{i} = \omega^{a}_{i}  - H^a_{i\lambda}\eta^{\lambda}$. We can also consider $H_{i\lambda}= (H^{a}_{i\lambda})$ as a vector of $\mathcal{W}^{n}_{m,p}$. The forms that generate algebraically $\mathcal{I}^n_{m,p}$ are then expressed as follows:
\begin{equation}\label{GaussGenExpression}
\begin{split}
\sum_{a}\omega^{a}_{i}\wedge\omega^{a}_{j}-\Omega^{i}_{j}& = \sum_{a}\pi^{a}_{i}\wedge\pi^{a}_{i} +\sum_{a} (H^{a}_{j\lambda} \pi^{a}_{i}- H^{a}_{i\lambda}\pi^{a}_{j})\wedge\eta^\lambda \\ & +\frac{1}{2} \sum_{a}\underbrace{(H^{a}_{i\lambda}H^{a}_{j\mu} - H^{a}_{i\mu}H^{a}_{j\lambda} - \mathcal{R}^{i}_{j;\lambda\mu})}_{*}\eta^\lambda\wedge\eta^\mu
\end{split}
\end{equation}
and
\begin{equation}\label{CartanGenExpression}
\omega^{a}_{i}\wedge\phi^{i}= \psi^{i}_{\lambda_{1}\dots\lambda_{p}}\pi^{a}_{i}\wedge\eta^{\lambda_{1}\dots \lambda_{p}} + \hspace{-1.2cm}\sum_{\tiny{\begin{array}{c}\lambda = 1,\dots , m \\1\leqslant \mu_{1}<\dots <\mu_{p}\leqslant m\end{array}}}\hspace{-1.2cm}\overbrace{H^{a}_{i\lambda}\psi^{i}_{\mu_{1}, \dots , \mu_{p}}}^{**}\eta^{\lambda\mu_{1}\dots\mu_{p}}.
\end{equation}

These new expressions of the forms in terms of vectors $H$ and the differential $1$-form $\pi$ will help us  compute the Cartan characters of an $m$-integral flag. To simplify these calculations, we will choose $H^{a}_{i\lambda}$, which are the coefficients of the second fundamental form, so that the quantities marked with $(*)$ and $(**)$ in the equations (\ref{GaussGenExpression}) and (\ref{CartanGenExpression}) vanish, and hence:
\begin{eqnarray}
\label{GenGaussEq}\sum_{a}(H^{a}_{i\lambda}H^{a}_{j\mu} - H^{a}_{i\mu}H^{a}_{j\lambda} )= \mathcal{R}^{i}_{j;\lambda\mu} \qquad \text{generalized Gauss equation}\\
\label{GenCartanId}\sum_{\tiny{\begin{array}{c}\lambda = 1,\dots , m \\1\leqslant \mu_{1}<\dots <\mu_{p}\leqslant m\end{array}}}\hspace{-1.2cm}H^{a}_{i\lambda}\psi^{i}_{\mu_{1}, \dots , \mu_{p}}\eta^{\lambda\mu_{1}\dots\mu_{p}} =0\qquad \text{generalized Cartan identities}.
\end{eqnarray}

As we mentioned in the introduction, the system of equations (\ref{GenCartanId}) is said to be \textit{generalized Cartan identities} because it gives us  relations between the coefficients of the second fundamental form which are not necessarily the usual symmetry given by the Cartan lemma. These properties of the coefficients and the fact that  the curvature tensor $(\mathcal{R}^{i}_{j;\lambda\mu})$ satisfies generalized Bianchi identities yield us to name the equation (\ref{GenGaussEq}) as the  \textit{generalized Gauss equation}. 

We now  define a \textit{generalized Gauss map} $\mathcal{G}^n_{m,p}:\mathcal{W}^n_{m,p}\otimes \mathbb{R}^n\otimes\mathbb{R}^m \longrightarrow \mathcal{K}^n_{m,p}$ defined for $H^{a}_{i\lambda}\in \mathcal{W}^n_{m,p}\otimes \mathbb{R}^n\otimes\mathbb{R}^m $ by 
\begin{equation}
\Big(\mathcal{G}^n_{m,p}(H)\Big)^{i}_{j;\lambda\mu}= \sum_{a}(H^{a}_{i\lambda}H^{a}_{j\mu} - H^{a}_{i\mu}H^{a}_{j\lambda}).
\end{equation}

Let us specialize in the conservation laws case, i.e., when $p=m-1$. We adopt the following notations: $\Lambda=(1,2,\dots, m)$ and $\Lambda\smallsetminus k = (1,\dots , k-1 ,k+1 ,\dots , m)$. We thus have  $\eta^{\Lambda} = \eta^1\wedge\dots\wedge\eta^m$ and $\eta^{\Lambda \smallsetminus k}= \eta^1\wedge\dots\wedge\eta^{k-1}\wedge\eta^{k+1}\dots\wedge\eta^m$. Let us construct an ordinary $m$-dimensional integral element of the exterior ideal $\mathcal{I}^n_{m,m-1}$ on $\mathbf{\Sigma}^n_{m,m-1}$. Generalized Bianchi identities are trivial in this case and so $\displaystyle{\text{dim}\,\mathcal{K}^n_{m,m-1} = \frac{n(n-1)}{2}\frac{m(m-1)}{2}}$.

The generalized Gauss equation is $H_{i\lambda}.H_{j\mu} - H_{i\mu}.H_{j\lambda} = \mathcal{R}^{i}_{j;\lambda\mu}$, where $H_{i\lambda}$ is viewed as a vector of the $\kappa^n_{m,m-1}$-Euclidean space $\mathcal{W}^{n}_{m,m-1}$. Generalized Cartan identities are
\begin{equation}
\sum_{\lambda = 1,\dots , m}(-1)^{\lambda+1} H^{a}_{i\lambda}\psi^{i}_{\Lambda \smallsetminus \lambda} = 0 \qquad \text{for all } a.
\end{equation}

The following lemma, for which a proof is later given, represents the key to the proof of Theorem \ref{CLGIEVBThm}.

\begin{Lem}\label{LemFond} Let $\kappa^n_{m,m-1}\geqslant (m-1)(n-1)$ and $\mathcal{W}^n_{m,m-1}$ be a Euclidean space of dimension $\kappa^n_{m,m-1}$. Let $\mathcal{H}^n_{m,m-1} \subset \mathcal{W}^n_{m,m-1}\otimes \mathbb{R}^n\otimes\mathbb{R}^m $ be the open set consisting of those elements $H=(H^a_{i\lambda})$ so that the vectors $\{ H_{i\lambda} | i=1,\dots , n-1  \text{ and } \lambda = 1, \dots , m-1\}$ are linearly independents as elements of $\mathcal{W}^n_{m,m-1}$ and satisfy  generalized Cartan identities. Then 
$ \mathcal{G}^n_{m,m-1} : \mathcal{H}^n_{m,m-1} \longrightarrow \mathcal{K}^n_{m,m-1}$ is a surjective submersion.
\end{Lem}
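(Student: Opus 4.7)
My plan is as follows. First, I would simplify both the codomain and the domain. Because $p=m-1$, the generalized Bianchi identity $\Omega^i_j\wedge\phi^j=0$ concerns a form of degree $m+1>m$ and is automatically satisfied on the $m$-manifold, so $\mathcal{K}^n_{m,m-1}=\wedge^2(\mathbb{R}^n)\otimes\wedge^2(\mathbb{R}^m)$ is a plain vector space of dimension $\binom{n}{2}\binom{m}{2}$. Since $\phi$ is non-vanishing, some $\psi^{i_0}_{\Lambda\smallsetminus\lambda_0}$ is non-zero; after relabeling indices assume $(i_0,\lambda_0)=(n,m)$. The generalized Cartan identity, viewed for each extension index $a$ as a single linear equation in the $H^a_{i\lambda}$'s, then solves uniquely for $H^a_{nm}$ as a linear function of $\{H^a_{i\lambda}:(i,\lambda)\neq(n,m)\}$. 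Thus $\mathcal{H}^n_{m,m-1}$ is parametrized freely by $\{H_{i\lambda}:(i,\lambda)\neq(n,m)\}\subset\mathcal{W}^n_{m,m-1}$, subject only to the open condition of inner linear independence.

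Second, I would show that $\mathcal{G}^n_{m,m-1}$ is a submersion at every point of $\mathcal{H}^n_{m,m-1}$. The differential at $H$ is
\[
d\mathcal{G}_H(K)^i_{j;\lambda\mu}=\langle K_{i\lambda},H_{j\mu}\rangle+\langle H_{i\lambda},K_{j\mu}\rangle-\langle K_{i\mu},H_{j\lambda}\rangle-\langle H_{i\mu},K_{j\lambda}\rangle
\]
for variations $K$ satisfying the linearized Cartan identity. Using linear independence of the inner frame $\{H_{i\lambda}\}_{i<n,\lambda<m}$ in $\mathcal{W}^n_{m,m-1}$ (whose dimension is at least $(n-1)(m-1)$, exactly matching the number of inner vectors), pick dual vectors $\xi_{i\lambda}\in\mathcal{W}^n_{m,m-1}$ satisfying $\langle\xi_{i\lambda},H_{j\mu}\rangle=\delta_{ij}\delta_{\lambda\mu}$ on inner indices. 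For each basis element $T^{i_0}_{j_0;\lambda_0\mu_0}$ of $\mathcal{K}^n_{m,m-1}$ (with $i_0<j_0$ and $\lambda_0<\mu_0$), I would construct an explicit variation $K$ supported at a single carefully chosen non-$(n,m)$ index, with $K_{nm}$ forced by the linearized Cartan identity, so that $d\mathcal{G}_H(K)$ equals $T^{i_0}_{j_0;\lambda_0\mu_0}$ modulo cross-terms lower in a suitable lexicographic order on basis elements. A Gauss-elimination argument then inverts these relations, yielding surjectivity of $d\mathcal{G}_H$.

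Finally, global surjectivity follows from submersion combined with the homogeneity $\mathcal{G}^n_{m,m-1}(tH)=t^2\mathcal{G}^n_{m,m-1}(H)$, which is preserved by scaling because the Cartan constraint is linear in $H$: the image is open in $\mathcal{K}^n_{m,m-1}$, and an explicit $H_0\in\mathcal{H}^n_{m,m-1}$ — for instance with inner vectors $\{H_{0,i\lambda}\}$ forming an orthonormal basis of $\mathcal{W}^n_{m,m-1}$ and boundary vectors adjusted to realize any prescribed direction in $\mathcal{K}^n_{m,m-1}$ — combined with positive scaling covers the whole target. The main obstacle I expect is the submersion step: a free variation $K_{i\lambda}$ at an inner index induces, through the Cartan identity, a compensating variation $K_{nm}$ whose contribution introduces cross-terms in $d\mathcal{G}_H(K)$ that must be cancelled by the triangular construction. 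The hypothesis $\dim\mathcal{W}^n_{m,m-1}\geq(n-1)(m-1)$ is exactly what provides the dual vectors $\xi_{i\lambda}$ needed to absorb these cross-terms.
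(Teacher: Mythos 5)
Your reduction of the codomain and domain and your submersion argument follow essentially the same route as the paper: since $p=m-1$ the generalized Bianchi identities are vacuous, one normalizes $\phi$ so that the generalized Cartan identity eliminates a single boundary vector (the paper sets $\psi^1_{\Lambda\smallsetminus m}=1$, $\psi^i_{\Lambda\smallsetminus m}=0$ for $i\geqslant 2$ and solves for $H^a_{1m}$; your choice of $H^a_{nm}$ is an immaterial relabeling), and then one shows that the Jacobian of $\Gg^n_{m,m-1}$ is block-triangular with respect to a filtration of $\Kk^n_{m,m-1}$, the diagonal blocks having full rank precisely because the $(n-1)(m-1)$ inner vectors $H_{i\lambda}$ are linearly independent in $\Ww^n_{m,m-1}$. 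What you describe as a lexicographic order with cross-terms pushed strictly below the diagonal is exactly what the paper encodes in the flag of subspaces $\Ee^k_\nu|^n_{m,m-1}$ and the identity $\mathrm{d}\Gg^n_{m,m-1}(\partial/\partial H^a_{k\nu})=\sum_{i<k,\,\lambda<\nu}H^a_{i\lambda}\,\epsilon^i_{k;\lambda\nu}+(\text{terms deeper in the flag})$; your plan for this half is sound, provided you actually verify that the compensating variation of the eliminated vector forced by the linearized Cartan identity lands only in the deeper pieces of the flag, which is the content of the paper's Proposition \ref{ExtensionInclusion}.

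The genuine gap is in the surjectivity step. The image $\Gg^n_{m,m-1}(\Hh^n_{m,m-1})$ is open (by the submersion) and invariant under $R\mapsto t^2R$ for $t>0$, because $\Hh^n_{m,m-1}$ is a cone; but a nonempty open cone need not be all of $\Kk^n_{m,m-1}$ --- an open half-space is already a counterexample --- so ``openness plus homogeneity'' does not finish the proof. The scaling trick closes the argument only once one knows that $0$ lies in the image, so that the image contains a ball around $0$ whose dilates exhaust $\Kk^n_{m,m-1}$. Your phrase ``boundary vectors adjusted to realize any prescribed direction in $\Kk^n_{m,m-1}$'' is precisely the statement to be proved, not a construction. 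The paper supplies the missing ingredient explicitly: take the inner vectors $\{H_{i\lambda}\}_{i\leqslant n-1,\,\lambda\leqslant m-1}$ to be an orthonormal family in $\Ww^n_{m,m-1}$ (possible since $\kappa^n_{m,m-1}\geqslant(n-1)(m-1)$), set $H_{n\lambda}=0$ for all $\lambda$, and take the remaining $H_{jm}$ to be the linear combinations $\sum_{i,\lambda}A^{i\lambda}_jH_{i\lambda}$ with $A^{1\lambda}_j=\psi^j_{\Lambda\smallsetminus\lambda}$ and $A^{i\lambda}_j=A^{j\lambda}_i$; the symmetry of $A$ in $i\leftrightarrow j$ together with orthonormality makes every bracket $H_{i\lambda}\cdot H_{j\mu}-H_{i\mu}\cdot H_{j\lambda}$ vanish while the generalized Cartan identities hold, producing a point of $\Hh^n_{m,m-1}$ in $(\Gg^n_{m,m-1})^{-1}(0)$. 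You need this, or some equivalent explicit preimage of $0$, for your final step to go through.
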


Let $\mathcal{Z}^n_{m,m-1}=\{ (M, \Upsilon, H) \in \mathbf{\Sigma}^n_{m,m-1}\times \Ww^n_{m,m-1}\otimes\RR^n\otimes\RR^m \, |\, H\in \mathcal{H}^n_{m,m-1} \}$. We conclude from Lemma \ref{LemFond} that $\mathcal{Z}^n_{m,m-1}$ is a submanifold\footnote{$\mathcal{Z}^n_{m,m-1}$ is the fiber of $\mathcal{R}$ by a submersion. The surjectivity of $\mathcal{G}^n_{m,m-1}$ assures the non-emptiness.} and hence, 
\begin{equation}
\dim\, \mathcal{Z}^n_{m,m-1} = \dim \, \mathbf{\Sigma}^n_{m,m-1} + \dim \, \mathcal{H}^n_{m,m-1} 
\end{equation}
where
\begin{eqnarray}
\dim \, \mathbf{\Sigma}^n_{m,m-1} = m+\frac{n(n-1)}{2}+n\kappa^n_{m,m-1}\\
\dim\, \mathcal{H}^n_{m,m-1} =(nm-1)\kappa^n_{m,m-1} - \frac{n(n-1)m(m-1)}{4}.
\end{eqnarray}

We define the map $\Phi^n_{m,m-1}:\mathcal{Z}^n_{m,m-1}\longrightarrow \mathcal{V}_{m}(\mathcal{I}^n_{m,m-1} , \eta^\Lambda)$ which associates  $(x,\Upsilon , H)\in \mathcal{Z}^n_{m,m-1}$ with the $m$-plan on which the differential forms that generate algebraically $\mathcal{I}^n_{m,m-1}$ vanish and the volume form $\eta^\Lambda$ on $\mathcal{M}$ does not vanish. $\Phi^n_{m,m-1}$ is then an embedding and hence $\dim\, \Phi(\mathcal{Z}^n_{m,m-1})= \dim\, \mathcal{Z}^n_{m,m-1}$. In what follows, we prove that in fact $\Phi(\mathcal{Z}^n_{m,m-1})$ contains only ordinary $m$-integral elements of $\mathcal{I}^n_{m,m-1}$. Since the coefficients $H^a_{i\lambda}$ satisfy the generalized Gauss equation and generalized Cartan identities, the differential  forms that generate the exterior ideal $\mathcal{I}^n_{m,m-1}$ are as follows:
\begin{eqnarray}
\sum_{a} \omega^{a}_{i}\wedge\omega^{a}_{j}-\Omega^{i}_{j} = \sum_{a}\pi^{a}_{i}\wedge\pi^{a}_{i} +\sum_{a} (H^{a}_{j\lambda} \pi^{a}_{i}- H^{a}_{i\lambda}\pi^{a}_{j})\wedge\eta^\lambda \\
\omega^{a}_{i}\wedge\phi^{i}= \psi^{i}_{\lambda_{1}\dots\lambda_{p}}\pi^{a}_{i}\wedge\eta^{\lambda_{1}\dots \lambda_{p}}.
\end{eqnarray}
We recall that  Cartan characters are the codimension of the polar space of integral elements. Their computations are a straightforward application of Proposition \ref{PropCp}. and yield
\begin{eqnarray}
C_{\lambda}= \frac{n(n-1)}{2}(\lambda +1)\text{ for }\lambda=0,..m-2 \\ C_{m-1} = \frac{n(n-1)}{2}m + \kappa^n_{m,m-1}
\end{eqnarray}
so 
\begin{equation}
C_{0}+\dots + C_{m-1}= m\frac{n(n-1)}{2}+\frac{n(n-1)m(m-1)}{4}+ \kappa^n_{m,m-1}.
\end{equation}
Finally, the codimension of the space on $m$-integral elements of $\mathcal{I}^{n}_{m,m-1}$ on which $\eta^\Lambda$ does not vanish is:
\begin{equation}
\begin{split}
\mathrm{codim}\, \mathcal{V}_{m}(\mathcal{I}^n_{m,m-1},\eta^\Lambda) & = \mathrm{dim}\, G_{m}\Big(T_{(x,\Upsilon)}\mathbf{\Sigma}^n_{m,m-1} \Big)- \Phi(\mathcal{Z}^n_{m,m-1})\\
&=m\frac{n(n-1)}{2}+\frac{n(n-1)m(m-1)}{4}+ \kappa^n_{m,m-1}.
\end{split}
\end{equation}
By the Cartan test, we conclude that $\Phi(\mathcal{Z}^n_{m,m-1})$ contains only ordinary $m$-integral flags. The Cartan--K\"{a}hler theorem then assures the existence of an $m$-integral manifold on which $\eta^\Lambda$ does not  vanish since the exterior ideal is in involution. We finally project the integral manifold on $\mathcal{M}\times \mathbb{R}^{n+\kappa}$. Let us notice that the requirement of the non vanishing of  the volume form  $\eta^\Lambda$ on the integral manifold yields to project the integral manifold on $\mathcal{M}$ and also to view it as a graph of a function $f$ defined on $\mathcal{M}$ with values in the space of $n$-adapted orthonormal frames of $\mathbb{R}^{n+\kappa}$. In  the isometric embedding problem, the composition of $f$ with the projection of the frames on the Euclidean space is by construction the  isometric embedding map.

\subsection{Another proof of Theorem \ref{CLGIEVBThm}}
This proof is based on explicitly constructing  an ordinary $m$-integral element, and the Cartan characters are computed by expliciting the polar space of an integral flag. As defined above, let us consider $\mathcal{I}^n_{m,m-1}$ an exterior ideal on $\mathbf{\Sigma}^n_{m,m-1}$.
Let us denote by  $(X_{\lambda})$ the dual basis of $(\eta^\lambda)$ and by $(Y_{A})$ the dual basis of $(\varpi^A)=(\varpi^{\sigma(^{i}_{j})} , \varpi^{\sigma(^a_{j})})=(\omega^{i}_{j}-\eta^{i}_{j} , \omega^{a}_{i})$ where $A= 1, \dots , \mathrm{dim} \, \mathbf{\Sigma}^n_{m,m-1} - m$ and 
$\displaystyle{\sigma(^{i}_{j}) = (j-i) + \frac{n(n-1)}{2} - \frac{(n-i)(n-i+1)}{2}}$ for $1\leqslant i < j \leqslant n $ and $\displaystyle{\sigma(^{a}_{i})=\frac{n(n-1)}{2} + (a-n-1)n + i}$ for $i=1,\dots , n$ and $a=n+1 , \dots , n+\kappa^n_{m,m-1}$. Let us consider on the Grassmannian manifold $G_{m}(\mathbf{\Sigma}^n_{m,m-1}, \eta^\Lambda)$ a basis $\mathfrak{X}_{\lambda}$ defined as follows:
\begin{equation}
\mathfrak{X}_{\lambda}(E)=X_\lambda + P^{A}_{\lambda}(E)Y_{A}\qquad A=1,\dots , \mathrm{dim}\, \mathbf{\Sigma}^n_{m,m-1} - m.
\end{equation} 

Let $(\Pi^\lambda (E))$ be the dual basis of $(\mathfrak{X}_{\lambda}(E))$. In order to compute the codimension in the Grassmannian $G_{m}(T\mathbf{\Sigma}^n_{m,m-1}, \eta^\Lambda)$ of $m$-integral elements of $\mathcal{I}^n_{m,m-1}$, we pull back the forms that generate the exterior ideal. To do so, we evaluate the forms on the basis $\mathfrak{X}_{\lambda}(E)$ and hence the expression of the forms on the Grassmannian are:  
\begin{eqnarray*}
&\displaystyle{(\varpi^{\sigma(^{i}_{j})})_{E} = P^{\sigma(^{i}_{j})}_{\lambda}\Pi^{\lambda}}\\
&\displaystyle{\Big( \sum_{a} \varpi^{\sigma(^{a}_{i})}\wedge \varpi^{\sigma(^{a}_{j})} -\Omega^{i}_{j} \Big)_{E} = \Big(\sum_{a}P^{\sigma(^{a}_{i})}_{\lambda}P^{\sigma(^{a}_{j})}_{\mu} - P^{\sigma(^{a}_{i})}_{\mu}P^{\sigma(^{a}_{j})}_{\lambda} - \mathcal{R}^{i}_{j;\lambda\mu}\Big)\Pi^{\lambda\mu}}\\
&\displaystyle{(\varpi^{\sigma^{a}_{j}}\wedge\phi^{i})_{E} = \Big(\sum_{\lambda}(-1)^{\lambda + 1}\psi^{i}_{\Lambda \smallsetminus \lambda}P^{\sigma(^{a}_{i})}_{\lambda}\Big)\Pi^{\Lambda} }.
\end{eqnarray*}
The number of functions that have linearly independent differentials represents the desired codimension, and hence with lemma \ref{LemFond}:
\begin{equation}
\mathrm{codim}\, \mathcal{V}_{m}(\mathcal{I}^n_{m,m-1}, \eta^\Lambda )= m\frac{n(n-1)}{2}+\frac{n(n-1)}{2}\frac{m(m-1)}{2} + \kappa^n_{m,m-1}
\end{equation}
Let us now construct an explicit $m$-integral element of $\mathcal{I}^n_{m,m-1}$. Since the exterior ideal does not contain any functions, every point of $\mathbf{\Sigma}^n_{m,m-1}$ is a $0$-integral element. Let $(E_{0})_{z} = z\in \mathbf{\Sigma}^n_{m,m-1}$.  A vector $\xi$ in the tangent space of $\mathbf{\Sigma}^n_{m,m-1}$ is of the form
\begin{equation}
\xi = \xi^\lambda_\mathcal{M} X_{\lambda}+ \xi^{A}Y_{A}.
\end{equation}
By considering the polar space, we obtain Cartan characters as previously. We then choose  the integral element in the following way:
\begin{equation}
e_{\lambda}=X_{\lambda}+ H^{a}_{i\lambda} Y_{\sigma(^{a}_{i})},
\end{equation}
where the coefficients $H^{a}_{i\lambda}$ are provided by the lemma \ref{LemFond}, which assures the existence of solutions to the successive polar systems during the construction of the integral flag.  The coefficients $\xi^{\sigma(^{i}_{j})}$ for all $1\leqslant i < j \leqslant n$ vanish for all the vectors $e$ because of $\varpi^{\sigma(^{i}_{j})}$. Let us  denote $E_{\lambda}=\mathrm{span}\{ e_{1},\dots ,e_{\lambda}\}$. The integral flag is then $F=E_{0}\subset E_{1}\subset \dots \subset E_{m-1}\subset E_{m}$. Cartan characters are the same as computed previously and the Cartan test assures that the flag is ordinary. By construction, the flag does not annihilate the volume form $\eta^\Lambda$.

\subsection{Proof of lemma \ref{LemFond}}

The generalized Gauss map $\mathcal{G}^n_{m,m-1}$ defined on $\mathcal{W}^n_{m,m-1}\otimes\mathbb{R}^n\otimes\mathbb{R}^m$ with values in $\mathcal{K}^n_{m,m-1}$ is a submersion if and only if the differential $\mathrm{d}\mathcal{G}^n_{m,m-1} \in \mathcal{L}(\mathcal{W}^n_{m,m-1}\otimes\mathbb{R}^n\otimes\mathbb{R}^m;\mathcal{K}^n_{m,m-1} )$, which  has  $m(m-1)n(n-1)/4$ lines and $\kappa^n_{m,m-1}\times m\times n$ columns, is of maximal rank.  %Recall that on a given point, $\Kk^n_{m,m-1} = \wedge^2\RR^n\otimes \wedge^2\RR^m$.  
 \\

In what follows, we make the assumption that  $\psi^1_{\Lambda \smallsetminus m} =1$ and $\psi^2_{\Lambda \smallsetminus m} =\dots  = \psi^n_{\Lambda \smallsetminus m} =0$. It is always possible  by changing the notation and reindexing.   With this assumption, the generalized Cartan identity shows that for all $a$, the coefficient $H^{a}_{1m}$ on a given point of the manifold,  is a linear combination of the $H_{i\lambda}$ where $\lambda  \neq m$.  When $n = m =2$, we assume that the determinant  $\mathrm{det}\psi = (\psi^1_{1}\psi^2_{2} - \psi^1_{2}\psi^2_{1})\neq 0$. In order to understand the proof of the submersitivity of $\mathcal{G}^{n}_{m,m-1}$, we first  explain and show  the proof for two special cases: when the vector bundle is of rank 3  over a manifold of dimension 2,  and when the rank of the vector bundle is arbitrary ($n\geqslant 2$) over a manifold of dimension 2. The proof of the  surjectivity of the generalized Gauss map is established afterwards.

\subsubsection{Submersitivity of the generalized Gauss map}
We will proceed step by step in order to expound the proof of Lemma \ref{LemFond}: For a warm-up, we start with the case $(\VV^3, \Mm^2 , g, \nabla, \phi)_{1}$, then the case of a general vector bundle over a surface, i.e., $(\VV^n, \Mm^2, g , \nabla)_{1}$,  next, the case of a vector bundle of rank 2 over an $m$-dimensional manifold, i.e., $(\VV^2, \Mm^m, g , \nabla, \phi)_{m-1}$, and  finally, we expound the conservation laws case, i.e.,  $(\VV^n, \Mm^m, g , \nabla, \phi)_{m-1}$.

Recall that the generalized Gauss map associates $H=(H^{a}_{i\lambda})$ with $ \Big((\Gg^n_{m,m-1})^{i}_{j;\lambda\mu}\Big) = (H_{i\lambda}H_{j\mu} - H_{i\mu}H_{j\lambda})^{i}_{j;\lambda\mu}$. The differential of $\Gg^n_{m,m-1}$ is then:
\begin{equation}
\mathrm{d}\Gg^n_{m,m-1} = \frac{\partial \Gg^n_{m,m-1}}{\partial H^{a}_{i \lambda}}\mathrm{d}H^{a}_{i\lambda}
\end{equation}
where
\begin{equation}
\mathrm{d}(\Gg^n_{m,m-1})^{i}_{j;\lambda\mu} = H_{j\mu}\mathrm{d}H_{i\lambda}+ H_{i\lambda}\mathrm{d}H_{j\mu} - H_{j\lambda}\mathrm{d}H_{i\mu}  - H_{i\mu}\mathrm{d}H_{j\lambda}.
\end{equation}

Denote by $\epsilon^{i}_{j;\lambda\mu}$ the natural basis on $\Kk^n_{m, m-1} = \wedge^2 \RR^n\otimes \wedge^2 \RR^m$.

\paragraph{\textsc{The case} $\mathbf{(\VV^3, \Mm^2 , g, \nabla , \phi)_{1}}$} Consider a vector bundle $\VV^3$ of rank 3 over a 2-dimensional differentiable manifold $\Mm^2$, endowed with a metric $g$ and a connection $\nabla$ compatible with $g$. Let $\phi$ be a non-vanishing  covariantly closed $\VV^2$-valued differential 1-form. By assumption,
\begin{equation}
\phi = E_{i}\phi = E_{i}\psi^{i}_{\lambda}\eta^\lambda = \left(\begin{array}{cc}1 & \psi^1_2 \\0 & \psi^2_2 \\0 & \psi^3_{2}\end{array}\right)\wedge\left(\begin{array}{c}\eta^1 \\\eta^2\end{array}\right).
\end{equation}

The generalized Cartan identities for each normal direction $a$ are:
\begin{equation}
H_{12}^{a} = \psi^1_2 H^a_{11} + \psi^2_{2}H^a_{21}+\psi^3_{2}H^a_{31}.
\end{equation}

The curvature tensors' space is $\Kk^3_{2,1} =\wedge^2 \RR^3 \otimes \wedge^2 \RR^2 = \wedge^2\RR^3 \otimes \RR = \text{span}\{\epsilon^1_{2;12}, \epsilon^1_{3;12}, \epsilon^2_{3;12},  \}$.

The generalized Gauss equations are:
\begin{equation}
\left\{\begin{array}{ccc} H_{11}.H_{22} - H_{12}.H_{21} & = & \Rr^1_{2;12} \\ H_{11}.H_{32} - H_{12}.H_{31} & = & \Rr^1_{3;12} \\ H_{21}.H_{32} - H_{22}.H_{31} & = & \Rr^2_{3:12}\end{array}\right.
\end{equation}

%When the Cartan identities are not taken into consideration, the differential of the generalized Gauss map $\Gg^3_{2,1}$ is:
%$\mathrm{d}\Gg^3_{2,1} =  \ ^t(\mathrm{d}(\Gg^3_{2,1})^1_{2;12}, \mathrm{d}(\Gg^3_{2,1})^1_{3;12}, \mathrm{d}(\Gg^3_{2,1})^2_{3;12}  )$, evaluated on $\partial / \partial H^{a}_{11}$, $ \partial / \partial H^{a}_{21}$, $ \partial / \partial H^{a}_{31}$, $ \partial / \partial H^{a}_{12} $, $ \partial / \partial H^{a}_{22}$ and $ \partial / \partial H^{a}_{23}$ respectively, and, expressed on the basis $\{ \epsilon^1_{2;12},\epsilon^1_{3;12}\}$ of $\Kk^3_{2,1}$, is 
%\begin{equation}\nonumber
%\mathrm{d}\Gg^3_{2,1} = \hspace{-1mm}\left(\begin{array}{c} \mathrm{d}(\Gg^3_{2,1})^1_{2,12} \\  \mathrm{d}(\Gg^3_{2,1})^1_{3,12}  \\  \mathrm{d}(\Gg^3_{2,1})^2_{3,12}  \end{array}\right) \hspace{-1mm}=\hspace{-1mm}\left(\begin{array}{cccccc}H_{22} & -H_{12} & 0 & -H_{21} & H_{11} & 0 \\H_{32} & 0 & -H_{12} & -H_{31} & 0 & H_{11} \\0 & H_{32} & -H_{22} & 0 & -H_{31} & H_{21}\end{array}\right) \hspace{-1mm}. \hspace{-1mm}\left(\begin{array}{c} \mathrm{d}H_{11} \\  \mathrm{d}H_{21}  \\  \mathrm{d}H_{31} \\  \mathrm{d}H_{12}  \\  \mathrm{d}H_{22} \\  \mathrm{d}H_{32} \end{array}\right)
%\end{equation}

Taken into consideration the generalized  Cartan identities, the differential of the generalized Gauss map is:
\begin{equation}\nonumber
\mathrm{d}\Gg^3_{2,1} \hspace{-1mm}=\hspace{-1mm}\left(\begin{array}{c} \mathrm{d}(\Gg^3_{2,1})^1_{2,12} \\  \mathrm{d}(\Gg^3_{2,1})^1_{3,12}  \\  \mathrm{d}(\Gg^3_{2,1})^2_{3,12}  \end{array}\right)\hspace{-1mm}=\hspace{-1mm}\left(\begin{array}{ccccc}H_{22} & -\psi^i_2 H_{i1}  & 0  & H_{11} & 0 \\H_{32} & 0 & -\psi^i_2 H_{i1}  & 0 & H_{11} \\0 & H_{32} & -H_{22}& -H_{31} & H_{21}\end{array}\right)\hspace{-1mm}.\hspace{-1mm} \left(\begin{array}{c} \mathrm{d}H_{11} \\  \mathrm{d}H_{21}  \\  \mathrm{d}H_{31}  \\  \mathrm{d}H_{22} \\  \mathrm{d}H_{32} \end{array}\right)
\end{equation}

Note that $H_{i\lambda}$ are vectors in the Euclidean space $\Ww^3_{2,1}$ of dimension $\kappa^3_{2,1}$  which  must be determined. We want  to extract from the $\Ww^3_{2,1}$-valued matrix $\mathrm{d}\Gg^3_{2}$ a submatrix of maximal rank (rank 3).  Denote by $L$ the subspace of cotangent of $\Ww^3_{2,1}\RR^3\otimes \RR^2$ defined by $\mathrm{d}H_{11} = \mathrm{d}H_{21} =\mathrm{d}H_{31}=0$. Then\footnote{$\mathrm{d}\Gg^3_{2,1}|_{L}$ is the submatrix of $\mathrm{d}\Gg^3_{2, 1}$ defined by: $((\mathrm{d}\Gg^3_{2}(\partial / \partial H_{22}))_{a},(\mathrm{d}\Gg^3_{2}(\partial / \partial H_{23}))_{a}  )$.} $\mathrm{d}\Gg^3_{2,1}|_{L}$ is :
\begin{equation}
\mathrm{d}\Gg^3_{2}|_{L} = \left(\begin{array}{cc}H_{11} & 0 \\0 & H_{11} \\-H_{31} & H_{21}\end{array}\right). \left(\begin{array}{c}\mathrm{d}H_{22}\\ \mathrm{d}H_{32}\end{array}\right).
\end{equation}

Therefore, if $\kappa^3_{2,1} \geqslant 2$, the matrix $\mathrm{d}\Gg^3_{2}|_{L}$ is of maximal rank if $H_{11}$ and $H_{21}$ are linearly independent vectors of $\Ww^3_{2,1}$. For instance, if $\kappa^3_{2,1} = 2$, i.e., the normal directions are $a=4, 5$, then
\begin{equation}
\mathrm{d}\Gg^3_{2}|_{L} = \left(\begin{array}{cccc}H^4_{11} & H^5_{11} &  0 & 0 \\0 & 0 & H^4_{11} & H^5_{11} \\-H^4_{31} & -H^5_{31} & H^4_{21} &H^5_{21}\end{array}\right). \left(\begin{array}{c}\mathrm{d}H^4_{22}\\ \mathrm{d}H^5_{22} \\  \mathrm{d}H^4_{32} \\ \mathrm{d}H^5_{32}\end{array}\right).
\end{equation}
is of maximal rank if $H_{11}$ and $H_{21}$ are linearly independent vectors. \\

Before investigating the submersitivity of the genralized Gauss map, let us first define a flag of the subspaces of $\Kk^n_{m,m-1}$.

\paragraph{Flag of $\mathbf{\Kk^n_{m,m-1}}$:}
Let us define the following subspaces of $\mathcal{K}^n_{m,m-1}$ as follows: for  $k=2, \dots, n$
\begin{equation}\nonumber
 \Ee^k|^n_{m,m-1} = \{Ê(\Rr^{i}_{j;\lambda \mu}) \in \Kk^n_{m,m-1} | \Rr^{i}_{j; \lambda \mu } = 0,  \text{if } 1\leqslant i< j\leqslant k \text{ and }\forall 1\leqslant \lambda < \mu \leqslant m \} 
 \end{equation}
 and for $\nu = 2, \dots, m$
 \begin{equation}\nonumber
  \quad  \Ee_{\nu}|^n_{m,m-1}  = \{Ê(\Rr^{i}_{j;\lambda \mu}) \in \Kk^n_{m,m-1} | \Rr^{i}_{j; \lambda \mu } = 0, \text{if }1\leqslant \lambda < \mu \leqslant \nu \text{ and }\forall 1 \leqslant i < j \leqslant n \}.
\end{equation}
By convention, $\Ee^1|^n_{m,m-1} =\Ee_{1}|^n_{m,m-1} = \Kk^n_{m,m-1}$. 
Therefore, 
\begin{eqnarray*}
0 = \Ee^n|^n_{m,m-1} \subset \Ee^{n-1}|^n_{m,m-1}  \subset \Ee^{n-2}|^n_{m,m-1} \subset \dots \subset \Ee^2|^n_{m,m-1}  \subset  \Kk^n_{m,m-1}\label{InclusionEek} \\
0 = \Ee_m|^n_{m,m-1} \subset \Ee_{m-1}|^n_{m,m-1}  \subset \Ee_{m-2}|^n_{m,m-1} \subset \dots \subset \Ee_{2}|^n_{m,m-1}  \subset \Kk^n_{m,m-1} .\label{InclusionEeNu}
\end{eqnarray*}

\begin{Ex}[$\mathbf{(\VV^3 , \Mm^4 , g , \nabla , \phi)_{3}}$]
An element in  $\Kk^3_{4,3} = \wedge^2\RR^3\otimes \wedge^2\RR^4  \simeq \RR^{18}$  is:

\begin{equation}
\Rr = \left(\begin{array}{cccccc} \Rr^1_{2;12} &  \Rr^1_{2;13}  &  \Rr^1_{2;23}  &  \Rr^1_{2;14}  &  \Rr^1_{2;24}  &  \Rr^1_{2;34}  \\  \Rr^1_{3;12} &  \Rr^1_{3;13}  &  \Rr^1_{3;23}  &  \Rr^1_{3;14}  &  \Rr^1_{3;24}  &  \Rr^1_{3;34}  \\ \Rr^2_{3;12}  &  \Rr^2_{3;13}  &  \Rr^2_{3;23}  &  \Rr^2_{3;14}  &  \Rr^2_{3;24}  &  \Rr^2_{3;34}  \end{array}\right)
\end{equation}
and  if $\Rr$ is in $\Ee^2|^3_{4,3} $ and in  $\Ee^3|^3_{4,3} $ then respectively
\begin{equation}
\Rr=\left(\begin{array}{cccccc}0 &  0  &0 & 0  & 0  & 0 \\ \ast&  \ast &  \ast  &  \ast  & \ast  &  \ast  \\ \ast &  \ast  &  \ast  &  \ast  &  \ast  &  \ast  \end{array}\right) \text{ and } \Rr=(0)
\end{equation}
and if $\Rr$ is in $\Ee_{2}|^3_{4,3}$, $\Ee_{3}|^3_{4,3}$ and in $\Ee_{4}|^3_{4,3}$ then  respectively
\begin{equation*}
\Rr=\left(\begin{array}{cccccc} 0 &  \ast & \ast & \ast  & \ast & \ast  \\  0 &  \ast & \ast & \ast & \ast & \ast \\ 0  & \ast & \ast &  \ast & \ast  &  \ast  \end{array}\right), \Rr=\left(\begin{array}{cccccc}0 &  0  & 0  &  \ast& \ast  &\ast  \\ 0 & 0  & 0  & \ast  &  \ast  &  \ast  \\ 0  &  0  & 0  &  \ast  &  \ast  & \ast  \end{array}\right), \text{ and }\quad  \Rr=0
\end{equation*}
\end{Ex}

\paragraph{\textsc{The case} $\mathbf{(\VV^n, \Mm^2 , g, \nabla , \phi)_{1}}$} Recall that $\Kk^n_{2,1} = \wedge^2\RR^n\otimes\RR$. Some columns in the Jacobian of $\Gg^n_{2,1}$ are expressed as follows:  for $k=2, \dots , n,$
\begin{equation}\label{DiffGaussMapm=2}
\mathrm{d}\mathcal{G}^n_{2,1} \Big(\frac{\partial}{\partial H^{a}_{k2}}\Big) = \Big(\sum_{i=1}^{k-1}H^{a}_{i1}\epsilon^{i}_{k;12} +  (\text{terms in } \Ee^k|^n_{2,1}) \Big)\in \mathcal{E}^{k-1}|^n_{2,1} .
\end{equation}
Note that $\Ee^{n}|^n_{2,1} = 0$, and hence, 
\begin{equation}\label{dGm2}
\mathrm{d}\mathcal{G}^n_{2,1} (\partial/ \partial H^{a}_{n2}) = \Big(\sum_{i=1}^{n-1}H^{a}_{i1} \epsilon^{i}_{n;12} \Big) \in \mathcal{E}^{n-1}|^n_{2,1}.
\end{equation}

From the linear map $\mathrm{d}\mathcal{G}^n_{2,1}$, we want to extract a submatrix of maximal rank.  Consider the submatrix $\Big( (\mathrm{d}\mathcal{G}^{n}_{2,1}(\partial/\partial H^{a}_{22}))_{a}, \dots , (\mathrm{d}\mathcal{G}^{n}_{2,1}(\partial/ \partial H^{a}_{n2} ))_{a}\Big)$. Each term $(\mathrm{d}\mathcal{G}^{n}_{2,1}(\partial/ \partial H^{a}_{k2} ))_{a}$, for a fixed $k$, is a matrix with $n(n-1)/2$ lines and $\kappa^n_{2,1}$ columns.  The equations (\ref{DiffGaussMapm=2}), (\ref{dGm2}) and the inclusions (\ref{InclusionEek}) show that the submatrix $\Big( (\mathrm{d}\mathcal{G}^{n}_{2,1}(\partial/\partial H^{a}_{22}))_{a}, \dots , (\mathrm{d}\mathcal{G}^{n}_{2,1}(\partial/ \partial H^{a}_{n2} ))_{a}\Big)$ is of maximal rank if the vectors $H_{11}, H_{21}, \dots H_{(n-1)1}$ are linearly independent vectors of $\mathcal{W}^n_{2,1}$ and $\kappa^n_{2,1} \geqslant(n-1)$ where the minimal embedding codimension $\kappa^n_{2,1}$ is given by the dimension of $\mathcal{E}^{n-1}|^n_{2,1}$.  Indeed, the matrix $\Big( (\mathrm{d}\mathcal{G}^{n}_{2,1}(\partial/\partial H^{a}_{22}))_{a}, \dots , (\mathrm{d}\mathcal{G}^{n}_{2,1}(\partial/ \partial H^{a}_{n2} ))_{a}\Big)$ is  triangular by different sized blocks. This is due to the inclusions (\ref{InclusionEek}) of the spaces $\Ee^k|^n_{2,1}$ . Note that the matrix $\Big( (\mathrm{d}\mathcal{G}^{n}_{2,1}(\partial/\partial H^{a}_{22}))_{a}, \dots , (\mathrm{d}\mathcal{G}^{n}_{2,1}(\partial/ \partial H^{a}_{n2} ))_{a}\Big)$ is rectangular, i.e., $n(n-1)/2$ lines and  $(\kappa^n_{2,1}\times (n-1))$ columns. There are actually $(n-1)$ terms in the ''diagonal'' and they all have the  same number of columns $\kappa^n_{2,1}$. The first term of the ''diagonal'' has one line and obviously starts at the first line, the second term has 2 lines and is at the second line, the third term has 3 lines and starts at the line number 1+2 = 3, \dots , and the last term has $(n-1)$ lines and starts at the line number $(n-2)(n-1)/2$.     From (\ref{DiffGaussMapm=2}) and  (\ref{dGm2}), the ''diagonal'' of $\Big( (\mathrm{d}\mathcal{G}^{n}_{2,1}(\partial/\partial H^{a}_{22}))_{a}, \dots , (\mathrm{d}\mathcal{G}^{n}_{2,1}(\partial/ \partial H^{a}_{n2} ))_{a}\Big)$ is: $\mathrm{diag}\Big((H^{a}_{11})_{a}, ^t(H^{a}_{11},H^{a}_{21})_{a}, \dots , ^t(H^{a}_{11}, \dots , H^{a}_{(n-1)1})_{a}\Big)$, and since $0\subset \Ee^{n-1}\subset \Ee^{n-2} \subset\dots\subset \Ee^2\subset \Ee^1 = \Kk^n_{2,1}$, the terms above this ''diagonal'' vanish in the matrix $\Big( (\mathrm{d}\mathcal{G}^{n}_{2,1}(\partial/\partial H^{a}_{22}))_{a}, \dots , \linebreak(\mathrm{d}\mathcal{G}^{n}_{2,1}(\partial/ \partial H^{a}_{n2} ))_{a}\Big)$.  Note that $^t(H_{11}, \dots , H_{k1})_{a}$ is a matrix with $k$ lines and $\kappa^n_{2,1}$ columns. The condition of being linearly independent for  the vector $(H_{11}, \dots H_{(n-1)1})$ assures that one can always extract, for each term of the diagonal, a submatrix of maximal rank. For instance, the ''diagonal'' term of $\mathrm{d}\Gg^n_{2,1}(\partial /\partial H^{a}_{32})$ is $^t(H^a_{11}, H^{a}_{21})$, which is a $2\times \kappa^n_{2,1}$ matrix, and since the two vectors are linearly independent, there exists an invertible  $2\times2$ submatrix.   The same argument holds for each term of the ''diagonal'', and finally,  $\kappa^n_{2,1} \geqslant \mathrm{dim}(\Ee^{n-1}|^n_{2,1})$ assures that the last terms of the ''diagonal'', $(\mathrm{d}\Gg^n_{2,1}(\partial /\partial H^a_{n2}))_{a}$, are of maximal rank. \\

\paragraph{\textsc{The case} $\mathbf{(\VV^n, \Mm^m , g, \nabla , \phi)_{m-1}}:$} For the conservation laws case, we  define the following subspaces of $\Kk^n_{m,m-1}$: for  $k=2, \dots, n$  and  for $ \nu = 2, \dots, m$,

\begin{equation*}
\Ee_{\nu}^k |^n_{m,m-1} = \{(\Rr)^{i}_{j;\lambda\mu}\in\Kk^n_{m,m-1}| \Rr^{i}_{j;\lambda\mu} = 0, \text{ if } 1\leqslant i< j\leqslant k \text{ and } 1\leqslant \lambda < \mu \leqslant \nu  \}
\end{equation*}
and hence,  $\Ee^n_{\nu}|^n_{m,m-1} = \Ee_{\nu}|^n_{m,m-1} \quad \text{ and } \quad \Ee^k_{m}|^n_{m,m-1} = \Ee^k|^n_{m,m-1}.$
By convention, $\Ee^1_{\nu}|^n_{m,m-1} = \Kk^n_{m,m-1}$ and $\Ee^k_{1}|^n_{m,m-1} = \Kk|^n_{m,m-1}$.

\begin{Rem} Let us  fix $\nu$ and $k$.  We have the same kind of flags as in (\ref{InclusionEek}) and (\ref{InclusionEeNu}):
\begin{eqnarray*}
 \Ee_{\nu}|^n_{m,m-1} \subset \Ee^{n-1}_{\nu}|^n_{m,m-1}  \subset \Ee^{n-2}_{\nu}|^n_{m,m-1} \subset \dots \subset \Ee^2_{\nu}|^n_{m,m-1}  \subset \Kk^n_{m,m-1} \\
\Ee^{k}|^n_{m,m-1} \subset \Ee^k_{m-1}|^n_{m,m-1}  \subset \Ee^k_{m-2}|^n_{m,m-1} \subset \dots \subset \Ee^k_{2}|^n_{m,m-1}  \subset  \Kk^n_{m,m-1}.
\end{eqnarray*}
\end{Rem}

\begin{Ex}[$\mathbf{(\VV^3 , \Mm^4 , g , \nabla , \phi)_{3}}$-Continued]
$\Ee^2_{4}|^3_{4,3} = \Ee^2|^3_{4,3}$, $\Ee^3_{2}|^3_{4,3} = \Ee_{2}|^3_{4,3}$,$\Ee^3_{3}|^3_{4,3} = \Ee_{3}|^3_{4,3}$ and $\Ee^3_{4}|^3_{4,3} = 0$ and if $\Rr$ is in $\Ee^{2}_{2}|^3_{4,3}$, $\Ee^{2}_{3}|^3_{4,3}$, then respectively
\begin{equation}
\Rr= \left(\begin{array}{cccccc} 0 &  \ast  &  \ast  &  \ast &  \ast &  \ast \\  \ast & \ast & \ast  &  \ast  & \ast  &  \ast  \\ \ast &  \ast &  \ast &  \ast &  \ast  &  \ast  \end{array}\right), 
\Rr=\left(\begin{array}{cccccc}0 & 0 &  0  & \ast  & \ast & \ast \\  \ast &  \ast  & \ast& \ast & \ast&  \ast \\ \ast &  \ast  & \ast &  \ast &  \ast&  \ast  \end{array}\right).
\end{equation}
\end{Ex}

\begin{Prop}[Extension of (\ref{InclusionEeNu})]\label{ExtensionInclusion}For $(\VV^n , \Mm^m, g, \nabla, \phi)_{m-1}$, we can have a longer flag by replacing in (\ref{InclusionEeNu}) each inclusion of the type  $\Ee_{\nu}|^n_{m,m-1} \subset \Ee_{(\nu-1)}|^n_{m,m-1}$, for $\nu = 2,\dots , m$,  by 
\begin{equation}\label{IncrustFlag}
\mathbf{\Ee_{\nu}} \subset  \Big(\Ee_{(\nu - 1)}\cap \Ee^{n-1}_{\nu}\Big)  \subset  \Big(\Ee_{(\nu - 1)}\cap \Ee^{n-2}_{\nu}\Big)\subset \dots \subset  \Big(\Ee_{(\nu - 1) }\cap \Ee^{3}_{\nu}\Big)  \subset \Big( \Ee_{(\nu - 1) }\cap \Ee^{2}_{\nu}\Big)\subset \mathbf{\Ee_{(\nu-1)}}.
\end{equation}
Note that we dropped $|^n_{m,m-1}$ for each subspace $\Ee$, in the above equation, for more clarity.  
\end{Prop}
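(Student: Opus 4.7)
The plan is to verify the refined flag \eqref{IncrustFlag} by pure set-theoretic unpacking of the definitions, using the two obvious monotonicity properties of the families $\Ee_\nu|^n_{m,m-1}$ and $\Ee^k_\nu|^n_{m,m-1}$: namely, enlarging $\nu$ enlarges the set of $(\lambda,\mu)$ on which $\Rr^{i}_{j;\lambda\mu}$ must vanish (so $\Ee_\nu|^n_{m,m-1} \subset \Ee_{\nu-1}|^n_{m,m-1}$), and, for fixed $\nu$, enlarging $k$ enlarges the set of $(i,j)$ on which the same vanishing is required (so $\Ee^k_\nu|^n_{m,m-1} \subset \Ee^{k-1}_\nu|^n_{m,m-1}$). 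Together with the boundary conventions $\Ee^1_\nu|^n_{m,m-1}=\Ee^k_1|^n_{m,m-1}=\Kk^n_{m,m-1}$ and the tautology $\Ee^n_\nu|^n_{m,m-1}=\Ee_\nu|^n_{m,m-1}$, this is essentially all the input the proof requires.

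First I would use monotonicity in $k$ to produce the internal portion of the chain. Intersecting $\Ee^{n-1}_\nu \subset \Ee^{n-2}_\nu \subset \cdots \subset \Ee^2_\nu \subset \Ee^1_\nu = \Kk^n_{m,m-1}$ with the fixed subspace $\Ee_{(\nu-1)}$ immediately yields
$$\Ee_{(\nu-1)}\cap \Ee^{n-1}_\nu \;\subset\; \Ee_{(\nu-1)}\cap \Ee^{n-2}_\nu \;\subset\; \cdots \;\subset\; \Ee_{(\nu-1)}\cap \Ee^{2}_\nu \;\subset\; \Ee_{(\nu-1)},$$
which supplies every inclusion in \eqref{IncrustFlag} except the leftmost one. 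For the leftmost inclusion $\Ee_\nu \subset \Ee_{(\nu-1)}\cap \Ee^{n-1}_\nu$, I would argue directly from the definitions: an element $\Rr\in \Ee_\nu|^n_{m,m-1}$ satisfies $\Rr^{i}_{j;\lambda\mu}=0$ for \emph{all} $1\leqslant i<j\leqslant n$ and \emph{all} $1\leqslant \lambda<\mu\leqslant \nu$. Restricting $(\lambda,\mu)$ to the subrange $\lambda<\mu\leqslant \nu-1$ shows $\Rr\in\Ee_{(\nu-1)}|^n_{m,m-1}$, while restricting $(i,j)$ to the subrange $i<j\leqslant n-1$ (with $(\lambda,\mu)$ in the range $\lambda<\mu\leqslant \nu$) shows $\Rr\in\Ee^{n-1}_\nu|^n_{m,m-1}$; hence $\Rr$ lies in the intersection.

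No genuine obstacle arises; the statement is a bookkeeping refinement of \eqref{InclusionEeNu}, and the proof is almost entirely a chase through the defining conditions. The only delicate point is keeping track of the boundary conventions on the families $\Ee^k_\nu$, which is precisely what allows the refined chain to terminate correctly at $\Ee_{(\nu-1)}$ on the right and to start correctly at $\Ee_\nu$ on the left. The payoff of this finer flag, which justifies stating it as a separate proposition, will come later: it is the flag used to read off the Cartan characters with the precision needed to match the dimension bound $\kappa^n_{m,m-1}\geqslant (m-1)(n-1)$ supplied by Lemma \ref{LemFond}.
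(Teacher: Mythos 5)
Your proof is correct and is essentially the argument the paper intends: the proposition is stated there without an explicit proof, being an immediate consequence of the definitions, and your verification via the two monotonicity properties (in $k$ and in $\nu$) together with the identification $\Ee^n_\nu|^n_{m,m-1}=\Ee_\nu|^n_{m,m-1}$ is exactly the intended definition chase. Nothing is missing.
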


\begin{Ex}[$\mathbf{(\VV^4 , \Mm^5 , g , \nabla , \phi)_{4}}$] We drop in this example the signs $|^4_{5,4}$ next to the subspaces $\Ee^k_{\nu}|^4_{5,4}$. When we put (\ref{IncrustFlag}) in (\ref{InclusionEeNu}), we obtain
$0=\Ee_{5}\subset  \Big(\Ee_{4}\cap \Ee^3_5\Big) \subset  \Big(\Ee_{4}\cap \Ee^2_5\Big) \subset\Ee_{4} \subset \Big(\Ee_{3}\cap \Ee^3_4\Big)\subset  \Big(\Ee_{3}\cap \Ee^2_4\Big)  \subset \Ee_{3}\subset \Big(\Ee_2\cap \Ee^3_3\Big)\subset  \Big(\Ee_{2}\cap \Ee^2_3\Big)  \subset \Ee_{2}\subset  \Ee^3_{2}\subset \Ee^2_{2} \subset \Ee_{1}= \Kk^4_{5,4}$.
\end{Ex}

%Using Proposition \ref{IncrustFlag}, the inclusion flag for the generalized curvature space in the conservation law case is the following (the signs $|^n_{m,m-1}$ next to the space $\Ee$ are dropped for more clarity):

%\begin{equation*}
%\begin{split}
%0&\subset (\Ee^{}_{m-1}\cap \Ee^{n-1}_{}) \subset (\Ee^{}_{m-1}\cap \Ee^{n-2}_{}) \subset \dots \subset (\Ee^{}_{m-1}\cap \Ee^{3}_{})\subset (\Ee^{}_{m-1}\cap \Ee^{2}_{})\subset \\  \mathbf{\Ee_{m-1}} &\subset (\Ee^{}_{m-2}\cap \Ee^{n-1}_{m-1})\subset (\Ee^{}_{m-2}\cap \Ee^{n-2}_{m-1}) \subset \dots \subset (\Ee^{}_{m-2}\cap \Ee^{3}_{m-1}) \subset (\Ee^{}_{m-2}\cap \Ee^{2}_{m-1}) \subset \\  \mathbf{\Ee_{m-2}} & \subset (\Ee^{}_{m-3}\cap \Ee^{n-1}_{m-2})\subset (\Ee^{}_{m-3}\cap \Ee^{n-2}_{m-2}) \subset \dots \subset (\Ee^{}_{m-3}\cap \Ee^{3}_{m-2}) \subset (\Ee^{}_{m-3}\cap \Ee^{2}_{m-2}) \subset \\ \vdots&\\ \mathbf{\Ee_{3}}&\subset   (\Ee^{}_{2}\cap \Ee^{n-1}_{3})\subset (\Ee^{}_{2}\cap \Ee^{n-2}_{3}) \subset \dots \subset (\Ee^{}_{2}\cap \Ee^{3}_{3}) \subset (\Ee^{}_{2}\cap \Ee^{2}_{3}) \subset \\  \mathbf{\Ee_{2}} & \subset \Ee^{n-1}_{2} \subset \Ee^{n-2}_{2}\subset \dots \subset  \Ee^{3}_{2} \subset  \Ee^{2}_{2} \subset \Kk^n_{m,m-1}. 
%\end{split}
%\end{equation*}

We proceed in the same way to prove Lemma \ref{LemFond}. The inclusion of the spaces $\Ee^k_{\nu}|^n_{m,m-1}$ is more complex and is given by the Proposition \ref{ExtensionInclusion}. We have, for  $k=2,\dots, n$ and  $\nu=2,\dots , m$  
\begin{equation}\label{DiffGaussMap}
\mathrm{d}\mathcal{G}^n_{m,m-1} (\partial/ \partial H^{a}_{k\nu}) = \Big(\hspace{-0.8cm} \sum_{\tiny{\left.\begin{array}{c}i=1,\dots , k-1 \\ \lambda = 1, \dots \nu-1 \end{array}\right.}}\hspace{-0.8cm}H^{a}_{i\lambda}\epsilon^{i}_{k;\lambda\nu} %- \hspace{-0.8cm}\sum_{\tiny{\begin{array}{c}j=k+1,\dots,n \\ \mu=\nu+1, \dots , m\end{array}}} \hspace{-0.8cm}H^{a}_{j\mu}\epsilon^{j-1}_{j;(\mu - 1)\mu} 
 + (\text{terms in } \Ee^{k+1}_{\nu - 1})\Big)\in \mathcal{E}^{k-1}_{\nu-1}|^n_{m,m-1}
\end{equation}
and since $\Ee^n_{k}|^n_{m,m-1} = 0$,
\begin{equation}
\mathrm{d}\mathcal{G}^n_{m,m-1} (\partial/ \partial H^{a}_{nm}) = \Big(\hspace{-0.8cm} \sum_{\tiny{\left.\begin{array}{c}i=1,\dots , n-1 \\ \lambda = 1, \dots m-1 \end{array}\right.}}\hspace{-0.8cm}H^{a}_{i\lambda}\epsilon^{i}_{n;\lambda m} \Big)\in \mathcal{E}^{n-1}_{m-1}|^n_{m,m-1}.
\end{equation}

As we explained previously, from the linear map $\mathrm{d}\mathcal{G}^n_{m,m-1}$, we want to extract a submatrix of maximal rank. Consider the submatrix $\Big((\mathrm{d}\mathcal{G}^{n}_{m,m-1}(\partial / \partial H^{a}_{22}))_{a} , \dots,\linebreak (\mathrm{d}\mathcal{G}^{n}_{m,m-1}(\partial / \partial H^{a}_{n2}))_{a} , \dots ,(\mathrm{d}\mathcal{G}^{n}_{m,m-1}(\partial / \partial H^{a}_{2m}))_{a}, \dots , (\mathrm{d}\mathcal{G}^{n}_{m,m-1}(\partial / \partial H^{a}_{nm}))_{a}\Big)$ which has $n(n-1)m(m-1)/4$ lines and $\kappa^n_{m,m-1}\times (n-1)(m-1)$ columns. This matrix is of maximal rank if the vectors $(H_{i\lambda})_{i=1,\dots , (n-1)\text{ and }\lambda = 1 , \dots , m-1}$ are linearly independent vectors of $\mathcal{W}^n_{m,m-1}$ where $\kappa^n_{m,m-1} \geqslant (n-1)(m-1).$ The minimal embedding codimension is given by the dimension of $(\Ee^{n-1}\cap\mathcal{E}_{m-1}|^n_{m,m-1})$. Indeed, Proposition (\ref{ExtensionInclusion}) shows that the submatrix is triangular by different sized blocks and that the terms above the block-diagonal are zero. There are $(n-1)(m-1)$ terms in the ''diagonal'' and they have the same number of columns $\kappa^n_{m-1}$.  

%\begin{tabular}{|ccc|ccc|ccc|ccc|c|}
%\hline
%$H_{11}$ &$H_{21}$ &$H_{31} $&$H_{12}$& $H_{22}$&$H_{32}$&$H_{13}$ &$H_{23}$&$H_{33}$ &$H_{14}$ & $H_{24}$ &$ H_{34}$ &  \\
%\hline
%$H_{11}$& $-H_{12}$&&$-H_{21}$&$H_{11}$&&&&&&&& $\Rr^1_{2;12}$\\
%\hline
%$H_{32}$&$-H_{22}$&&$-H_{31}$&&$H_{11}$&&&&&&& $\Rr^1_{3;12}$\\
%&$H_{32}$&$-H_{22}$&&$-H_{31}$&$H_{21}$&&&&&&& $\Rr^2_{3;12}$\\
%\hline
%$H_{23}$&$-H_{13}$&&&&&$-H_{21}$&$H_{11}$&&&&& $\Rr^1_{2;13}$\\
%&&&$H_{23}$&$-H_{13}$&&$-H_{22}$&$H_{12}$&&&&& $\Rr^1_{2;23}$\\
%\hline
%$H_{33}$&&$-H_{13}$&&&&$-H_{31}$&&$H_{11}$&&&& $\Rr^1_{3;13}$\\
%&&&$H_{33}$&&$-H_{13}$&$-H_{32}$&&$H_{12}$&&&& $\Rr^1_{3;23}$\\
%&$H_{33}$&$-H_{23}$&&&&&$-H_{31}$&$H_{21}$&&&& $\Rr^2_{3;13}$\\
%&&&&$H_{33}$&$-H_{23}$&&$-H_{32}$&$H_{22}$&&&& $\Rr^2_{3;23}$\\
%\hline

%$H_{24}$&$-H_{14}$&&&&&&&&$-H_{21}$&$H_{11}$&& $\Rr^1_{2;14}$\\
%&&&$H_{24}$&$-H_{14}$&&&&&$-H_{22}$&$H_{12}$&& $\Rr^1_{2;24}$\\
%&&&&&&$H_{24}$&$-H_{14}$&&$-H_{23}$&$H_{13}$&& $\Rr^1_{2;34}$\\
%\hline
%$H_{34}$&&$-H_{14}$&&&&&&&$-H_{31}$&& $H_{11}$& $\Rr^1_{3;14}$\\
%&&&$H_{34}$&&$-H_{14}$&&&&$-H_{32}$&&$H_{12}$& $\Rr^1_{3;23}$\\
%&&&&&&$H_{34}$&&$-H_{14}$&$-H_{33}$&& $H_{13}$& $\Rr^1_{3;34}$\\
%&$H_{34}$&$-H_{24}$&&&&&&&&$-H_{31}$&$H_{21}$& $\Rr^2_{3;14}$\\
%&&&&$H_{34}$&$-H_{24}$&&&&& $-H_{32}$& $H_{22}$& $\Rr^2_{3;24}$\\
%&&&&&&&$H_{34}$&$-H_{24}$&&$-H_{33}$& $H_{23}$& $\Rr^2_{3;34}$\\
%\hline
%\end{tabular}

\subsubsection{The surjectivity of the generalized Gauss map}
It remains to show that the generalized Gauss map is surjective, namely
\begin{equation}
\Gg^n_{m,m-1}(\Hh^n_{m,m-1}) = \Kk^n_{m,m-1}.
\end{equation}

It is sufficient to show that there exists a pre-image of $0$, i.e., vectors $H_{i\lambda}$ in $\Ww^n_{m,m-1}$, satisfying generalized Cartan identities and such that the set $\{H_{i\lambda} \}$ for $i=1,\dots , n-1$ and $\lambda = 1, \dots , m-1$ are linearly independent vectors in $\Ww^n_{m,m-1}$. Indeed, the differential of the generalized Gauss map being surjective implies that $\Gg^n_{m,m-1}(\Hh^n_{m,m-1})$ will contain a neighborhood  of 0 in $\Kk^n_{m,m-1}$, and thus $\Gg^n_{m,m-1}(\Hh^n_{m,m-1}) = \Kk^n_{m,m-1}$ as $\Gg^n_{m,m-1}(\rho H) = \rho^2 \Gg^n_{m,m-1}(H)$.\\

We will construct a pre-image of $0$ in $\Hh^n_{m,m-1}$. Recall that $\Ww^n_{m,m-1}$ is of dimension $\kappa^n_{m,m-1} \geqslant (n-1)(m-1)$. We can choose $H_{i\lambda}$ as follows:
\begin{equation}
\{H_{i\lambda} \}_{i=1,\dots, n-1 \text{ and } \lambda = 1 , \dots , m-1} \text{ is an orthonormal set of vectors in }\Ww^n_{m,m-1}
\end{equation}

\begin{equation}
H_{n1} = H_{n2}=\dots = H_{nm}=0
\end{equation}

\begin{equation}\label{Him}
\text{For } j=2, \dots, m, \qquad H_{jm} =\sum_{\tiny{\left.\begin{array}{c}i=1,\dots, n-1 \\\lambda = 1, \dots , m-1\end{array}\right.}} A^{i\lambda}_{j}H_{i\lambda}
\end{equation}
where 
\begin{equation}
A^{1\lambda}_{j} = \psi^j_{\Lambda\smallsetminus \lambda} \qquad \text{and} \qquad A^{i\lambda}_{j} = A^{j\lambda}_{i} .
\end{equation}

%The vectors in (\ref{Him}) are expressed as follows
%\begin{equation*}
%\left(\begin{array}{c}H_{2m}  \\\vdots \\ H_{(n-1)m}\end{array}\right) =  \left(
   %  \raisebox{0.5\depth}{
      % \xymatrixcolsep{0.5ex}
       % \xymatrixrowsep{1ex}
       %\xymatrix{
  %\psi^2_{\Lambda \smallsetminus 1}\ar@{.}[dd] & A^{21}_{2} \ar@{.}[r]\ar@{.}[dd]  &A^{(n-1)1}_{2}\ar@{.}[dd] & \dots & \psi^2_{\Lambda \smallsetminus (m-1)} & A^{2(m-1)}_{2} \ar@{.}[r]\ar@{.}[dd]  &    A^{(n-1)(m-1)}_{2}\ar@{.}[dd] \\
 % & & & & & & \\ 
  %\psi^{n-1}_{\Lambda \smallsetminus 1} & A^{21}_{(n-1)} \ar@{.}[r] &  A^{(n-1)1}_{(n-1)} & \dots & \psi^{n-1}_{\Lambda \smallsetminus (m-1)} & A^{2(m-1)}_{(n-1)} \ar@{.}[r]&   A^{(n-1)(m-1)}_{(n-1)}
       % }}
   %\right)
%\left(\begin{array}{c}H_{11} \\ H_{21} \\ \vdots \\ H_{(n-1)1} \\ H_{12} \\ H_{22} \\ \vdots \\ H_{(n-1)2}\\ \vdots\\ H_{1(m-1)} \\ H_{2(m-1)} \\ \vdots \\ H_{(n-1)(m-1)} 
%\end{array}\right) 
%\end{equation*}

% $$
 %\left(\begin{array}{cccccccccc}\psi^2_{\Lambda \smallsetminus 1} & a^{21}_{2} & \dots  &  a^{(n-1)1}_{2} & \dots & \psi^2_{\Lambda \smallsetminus (m-1)} & a^{2(m-1)}_{2} & \dots  &  a^{(n-1)1}_{2} \\
%\vdots & \vdots & & \vdots &  \dots & \vdots & \vdots & & \vdots \\
%\psi^{n-1}_{\Lambda \smallsetminus 1} & a^{21}_{(n-1)} & \dots  &  a^{(n-1)1}_{(n-1)} & \dots & \psi^{n-1}_{\Lambda \smallsetminus (m-1)} & a^{2(m-1)}_{(n-1)} & \dots  &  a^{(n-1)1}_{(n-1)}
%\end{array}\right)$$

\section{Conservation laws for covariant divergence free energy-momentum tensors}We present here an application for our main result to covariant divergence free energy-momentum tensors. 

\begin{Prop}
Let $(\mathcal{M}^{m}, g)$ be a $m$-dimensional real analytic Riemannian manifold, $\nabla$ be the Levi-Civita connection and $T$ be contravariant 2-tensor with a vanishing covariant divergence. There then  exists a conservation law for $T$ on $\mathcal{M}\times\mathbb{R}^{m+(m-1)^2}$.
\end{Prop}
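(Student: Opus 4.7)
The plan is to reduce the statement to a direct application of Theorem \ref{CLGIEVBThm} with the vector bundle chosen as the tangent bundle itself. Concretely, I would set $\mathbb{V}=\mathrm{T}\mathcal{M}$, with $g$ the Riemannian metric and $\nabla$ the Levi-Civita connection (which is compatible with $g$), so that $n=m$. Since $p=m-1$ in the theorem, the embedding codimension is $\kappa^{m}_{m,m-1}\geqslant (m-1)(m-1)=(m-1)^2$, and the target is precisely $\mathcal{M}\times\mathbb{R}^{m+(m-1)^2}$, matching the dimension in the statement. It therefore suffices to exhibit a non-vanishing covariantly closed $\mathrm{T}\mathcal{M}$-valued $(m-1)$-form $\phi$ canonically associated with $T$.

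The natural candidate is obtained by contracting $T$ with the volume form. In a local orthonormal moving frame $(E_{1},\dots,E_{m})$ with dual coframe $(\eta^{1},\dots,\eta^{m})$, writing $T=T^{\lambda\mu}E_{\lambda}\otimes E_{\mu}$, I define
\begin{equation}
\phi \;=\; E_{\lambda}\otimes \phi^{\lambda}, \qquad \phi^{\lambda}\;:=\;T^{\lambda\mu}\,\bigl(E_{\mu}\lrcorner\,\mathrm{vol}_{\mathcal{M}}\bigr).
\end{equation}
This is the higher rank analogue of the classical identification $X\mapsto X\lrcorner\,\mathrm{vol}_{\mathcal{M}}$ recalled in the introduction. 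The central computation of the proof is then to verify the identification
\begin{equation}
\mathrm{d}_{\nabla}\phi \;=\; 0 \quad\Longleftrightarrow\quad \nabla_{\mu}T^{\lambda\mu}=0\quad\text{for all }\lambda,
\end{equation}
which should be checked by expanding $\mathrm{d}\phi^{\lambda}+\eta^{\lambda}_{\nu}\wedge\phi^{\nu}$ using $\eta^{\nu}\wedge (E_{\mu}\lrcorner\,\mathrm{vol}_{\mathcal{M}})=\delta^{\nu}_{\mu}\mathrm{vol}_{\mathcal{M}}$ and the structure equation for the Levi-Civita connection acting on $E_{\mu}\lrcorner\,\mathrm{vol}_{\mathcal{M}}$. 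After collecting the terms, both the derivative of the components of $T$ and the connection terms reorganize to produce precisely $(\nabla_{\mu}T^{\lambda\mu})\,\mathrm{vol}_{\mathcal{M}}$, so the covariant closure of $\phi$ is equivalent to the vanishing covariant divergence of $T$.

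Once this identification is in hand, $\phi$ is covariantly closed and non-vanishing (on the open set where $T$ is non-vanishing, to which we restrict), and all data are real analytic. Theorem \ref{CLGIEVBThm} then produces a local isometric embedding $\Psi$ of $\mathrm{T}\mathcal{M}$ into $\mathcal{M}\times\mathbb{R}^{m+(m-1)^2}$ over $\mathcal{M}$ such that $\mathrm{d}\Psi(\phi)=0$, i.e.\ $\Psi(\phi)$ is a closed $\mathbb{R}^{m+(m-1)^2}$-valued $(m-1)$-form. Each scalar component of $\Psi(\phi)$ is a closed $(m-1)$-form on $\mathcal{M}$ and hence, via Hodge duality, yields a divergence-free vector field depending linearly on $T$; this is the conservation law for $T$.

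The only real obstacle is the bookkeeping in the second step, namely verifying that $\mathrm{d}_{\nabla}\phi=0$ is exactly the covariant divergence-free condition on $T$; the rest is a direct invocation of the main theorem with the minimal codimension $(m-1)^2$ coming from $\kappa^{n}_{m,m-1}\geqslant (m-1)(n-1)$ evaluated at $n=m$.
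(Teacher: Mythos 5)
Your proposal is correct and follows essentially the same route as the paper: both associate to $T$ the $\mathrm{T}\mathcal{M}$-valued $(m-1)$-form with components $T^{\lambda\mu}(E_{\mu}\lrcorner\,\mathrm{vol}_{\mathcal{M}})$, verify that covariant closure of this form is equivalent to $\nabla_{\mu}T^{\lambda\mu}=0$, and then invoke Theorem \ref{CLGIEVBThm} with $n=m$, $p=m-1$, giving the codimension $(m-1)^2$. The computation you outline is exactly the one the paper carries out via the Christoffel symbols and the first structure equation.
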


\begin{proof}
Let us consider a bivector $T\in \Gamma(\mathrm{T}\mathcal{M}\otimes \mathrm{T}\mathcal{M})$ which  is expressed in a chart by $T=T^{\lambda\mu}\xi_{\lambda}\otimes\xi_{\mu}$, where $(\xi_{1} , \dots , \xi_{m})$ is the dual basis of an orthonormal moving coframe $(\eta^1 , \dots , \eta^m)$. The volume form  is denoted by $\eta^{\Lambda} = \eta^1\wedge\dots \wedge\eta^m$. Using the interior product, we can associate  any bivector $T$ with a $\mathrm{T}\mathcal{M}$-valued $m$-differential form $\tau$ defined as follows:

\begin{equation*}
\begin{split}
\Gamma ( \mathrm{T}\mathcal{M}\otimes \mathrm{T}\mathcal{M}) &\longrightarrow \Gamma (\mathrm{T}\mathcal{M}\otimes \wedge^{(m-1)}\mathrm{T}^\ast\mathcal{M})\\
T=T^{\lambda\mu}\xi_{\lambda}\otimes\xi_{\mu} & \longmapsto \tau = \xi_{\lambda}\otimes \tau^{\lambda}=\xi_{\lambda}\otimes \Big(T^{\lambda\mu}(\xi_{\mu} \lrcorner \eta^{\Lambda})\Big).
\end{split}
\end{equation*}

The tangent space $\mathrm{T}\mathcal{M}$ is endowed with the Levi-Civita connection $\nabla$. Let us compute the covariant derivative of $\tau$. 
\begin{equation}
\mathrm{d}_{\nabla}\tau = \xi_{\lambda}\otimes (\mathrm{d}\tau^{\lambda} + \eta^{\lambda}_{\mu}\wedge\tau^\mu)
\end{equation}

 On one hand, using the first Cartan equation %\footnote{The first Cartan structure equation is: $\mathrm{d}\eta^{\lambda}+\eta^{\lambda}_{\mu}\wedge\eta^{\mu} = 0$ for all $\lambda$} 
 that expresses the vanishing of the torsion of the Levi-Civita connection and the expression of the Christoffel symbols %\footnote{The Christoffel symbols are the functions $\Gamma$ defined by: $\eta^{\lambda}_{\mu} = \Gamma^{\lambda}_{\nu\mu}\eta^\nu.$}
  in terms of the connection 1-form, we obtain
 \begin{equation}
\begin{split}
\mathrm{d}\tau^\lambda &= \mathrm{d}\Big(T^{\lambda\mu}(\xi_{\mu}\lrcorner \eta^{\Lambda})\Big) = \mathrm{d}(T^{\lambda\mu})\wedge(\xi_{\mu}\lrcorner \eta^{\Lambda})+ T^{\lambda\mu}\mathrm{d}(\xi_{\mu}\lrcorner \eta^{\Lambda})
= \Big(\xi_{\mu}(T^{\lambda\mu}) + T^{\lambda\mu}\Gamma^{\nu}_{\nu\mu}\Big)\eta^{\Lambda} 
\end{split}
\end{equation}
and
\begin{equation}
\eta^{\lambda}_{\mu}\wedge\tau^{\mu} = \eta^{\lambda}_{\mu}\wedge T_{\mu\nu}(\xi_{\nu}\lrcorner\eta^{\Lambda})= \Big( T^{\mu\nu}\Gamma^{\lambda}_{\nu\mu}\Big)\eta^{\Lambda}
\end{equation}
consequently
\begin{equation}
\mathrm{d}_{\nabla}\tau = \xi_{\lambda}\otimes\Big[\Big(\xi_{\mu}(T^{\lambda\mu}) + T^{\lambda\mu}\Gamma^{\nu}_{\nu\mu}+ T^{\mu\nu}\Gamma^{\lambda}_{\nu\mu}\Big)\eta^{\Lambda} \Big].
\end{equation}

On the other hand, a straightforward computation of the divergence of the bivector leads us to
\begin{equation}
\nabla_{\mu}T^{\lambda\mu} = \xi_{\mu}(T^{\lambda\mu}) + T^{\lambda\mu}\Gamma^{\nu}_{\nu\mu}+ T^{\mu\nu}\Gamma^{\lambda}_{\nu\mu} \quad \text{ for all }\lambda=1,\dots , m.
\end{equation}
 We conclude then that 
 \begin{equation}
\mathrm{d}_{\nabla}\tau = 0 \Leftrightarrow \nabla_{\mu}T^{\lambda\mu}=0 \quad \forall \lambda=1,\dots, m.
\end{equation}
Hence, for an $m$-dimensional Riemannian manifold $\mathcal{M}$, the main result of this article assures the existence of an isometric embedding $\Psi: T\mathcal{M} \longrightarrow \mathcal{M}\times \mathbb{R}^{m+(m-1)^2}$ such that $\mathrm{d}(\Psi (\tau))=0$ is a conservation law for a  covariant divergence free energy-momentum tensor.
\end{proof}

 For instance, if $\mathrm{dim}\mathcal{M} = 4$, then $\Psi(\tau)$ is a closed differential $3$-form on $\mathcal{M}$ with values in $\mathbb{R}^{13}$.

\section*{acknowledgements}
The author is very grateful to Sara Carey for  reading the manuscript  and to Fr\'ed\'eric H\'elein for his helpful remarks and suggestions.

\bibliographystyle{amsalpha}
\bibliography{Biblio}

\end{document}